\documentclass[12pt,reqno]{amsart}

\usepackage{mathdots}

\usepackage{color}

%
\usepackage{pdfsync}

\usepackage{enumitem}

\usepackage{wasysym}

%
\usepackage{amssymb}


%

%
\usepackage{mathrsfs}

%

%

%
\DeclareMathAlphabet{\mathpzc}{OT1}{pzc}{m}{it}

%
\usepackage[all]{xy}

%
\usepackage{tikz}
\usetikzlibrary{arrows,matrix,decorations.pathmorphing,decorations.pathreplacing,positioning,shapes.geometric,shapes.misc,decorations.markings,decorations.fractals,calc,patterns}

\usepackage{graphicx}


\usepackage{float}

\usepackage[bottom]{footmisc}

\usepackage{moreenum}

\usepackage{makecell}

%
\entrymodifiers={+!!<0pt,\fontdimen22\textfont2>}

\setlength{\textwidth}{182mm}
\setlength{\textheight}{250mm}
\addtolength{\oddsidemargin}{-2.7cm}
\addtolength{\evensidemargin}{-2.7cm}
\addtolength{\topmargin}{-22mm}

\raggedbottom


%

\def\cF{\mathscr{F}}

\def\cO{\mathscr{O}}
\def\cP{\mathscr{P}}

%

\def\BC{\mathbb{C}}

\def\BH{\mathbb{H}}

\def\BN{\mathbb{N}}

\def\BQ{\mathbb{Q}}
\def\BR{\mathbb{R}}

\def\BZ{\mathbb{Z}}

%

%

%


%

%


%

\def\adots{\mathinner{\mkern1mu\raise1.0pt\vbox{\kern7.0pt\hbox{.}}\mkern2mu\raise4.0pt\hbox{.}\mkern2mu\raise7.0pt\hbox{.}\mkern1mu}}

\def\ast{{\textstyle *}}

\def\dddots{\mathinner{\mkern1mu\raise10.0pt\vbox{\kern7.0pt\hbox{.}}\mkern2mu\raise5.3pt\hbox{.}\mkern2mu\raise1.0pt\hbox{.}\mkern1mu}}
\def\dddotssmall{\mathinner{\mkern1mu\raise7.0pt\vbox{\kern7.0pt\hbox{.}}\mkern-1mu\raise4pt\hbox{.}\mkern-1mu\raise1.0pt\hbox{.}\mkern1mu}}

\def\id{\operatorname{id}}
\def\Image{\operatorname{Im}}

\def\Mob{\operatorname{M\ddot{o}b}}

\def\PSL2{\operatorname{PSL}_2}

\def\SL2{\operatorname{SL}_2}

%
\numberwithin{equation}{section}


%

%

%
\newtheorem{Lemma}{Lemma}[section]
\newtheorem{Theorem}[Lemma]{Theorem}

\theoremstyle{definition}
\newtheorem{Definition}[Lemma]{Definition}
\newtheorem{Setup}[Lemma]{Setup}

\newtheorem{Construction}[Lemma]{Construction}
\newtheorem{Remark}[Lemma]{Remark}

\newtheorem*{bfhpg*}{}

%
  {\begin{list}{}{%
    \settowidth{\labelwidth}{\textbf{#1:}}%
    \setlength{\leftmargin}{\labelwidth}\addtolength{\leftmargin}{\labelsep}}}%
  {\end{list}}



\begin{document}

\setlength{\parindent}{0pt}
\setlength{\parskip}{7pt}

\title[$p$-angulated Conway--Coxeter]{A $p$-angulated generalisation of Conway and Coxeter's theorem on frieze patterns}

\author{Thorsten Holm}
\address{Institut f\"{u}r Algebra, Zahlentheorie und Diskrete
  Mathematik, Fa\-kul\-t\"{a}t f\"{u}r Ma\-the\-ma\-tik und Physik, Leibniz
  Universit\"{a}t Hannover, Welfengarten 1, 30167 Hannover, Germany}
\email{holm@math.uni-hannover.de}
\urladdr{http://www.iazd.uni-hannover.de/\~{ }tholm}

\author{Peter J\o rgensen}
\address{School of Mathematics, Statistics and Physics,
Newcastle University, Newcastle upon Tyne NE1 7RU, United Kingdom}
\email{peter.jorgensen@ncl.ac.uk}
\urladdr{http://www.staff.ncl.ac.uk/peter.jorgensen}


\keywords{Farey graph, Hecke group, M\"{o}bius transformation, polygon, Ptolemy relation}

\subjclass[2010]{05B45, 05E15, 05E99, 51M20}


\begin{abstract} 

Coxeter defined the notion of frieze pattern, and Conway and Coxeter proved that triangulations of polygons are in bijection with integral frieze patterns.  We show a $p$-angulated generalisation involving non-integral frieze patterns.  We also show that polygon dissections give rise to even more general non-integral frieze patterns.

\end{abstract}

\maketitle

\setcounter{section}{-1}
\section{Introduction}
\label{sec:introduction}

Recall the following definition due to Coxeter \cite[sec.\ 1]{C}. 

\begin{Definition}
\label{def:frieze_pattern}
Let $n \geqslant 0$ be an integer.  A {\em frieze pattern of width $n$} consists of $n+4$ infinite horizontal rows of non-negative real numbers, with an offset between even and odd rows, see Figure~\ref{fig:sqrt2_frieze} on page~\pageref{fig:sqrt2_frieze} and Figure~\ref{fig:sqrt3_frieze} on page~\pageref{fig:sqrt3_frieze}.  The rows are numbered $0$ through $n+3$, starting from below, and they satisfy:
\begin{enumerate}
\setlength\itemsep{4pt}

  \item  Rows number $0$ and $n+3$ consist of zeroes.  Rows number $1$ and $n+2$ consist of ones.  Rows number $2$ through $n+1$ consist of positive real numbers.
  
  \item  Each ``diamond'' 
$
\vcenter{
  \xymatrix @-2.1pc {
    & b & \\ a & & d \\ & c \lefteqn{\phantom{\widetilde{b}}} & 
                    }
        }
$
satisfies $ad - bc = 1$.
\end{enumerate}
Row number $2$ (the first non-trivial row from below) is called the {\em quiddity row}.
\hfill $\Box$
\end{Definition}

Some authors permit more general entries than we do, e.g.\ allowing rows $1$ through $n+2$ to contain zeroes.  However, our definition implies that rows $1$ through $n+2$ consist of non-zero numbers, and hence the quiddity row determines the whole frieze pattern by repeated applications of (ii).

Frieze patterns are deep objects, as one can see by replacing the positive real numbers by Laurent polynomials in the definition.  Then the entries of the frieze patterns become cluster variables in cluster algebras of type $A_n$, so frieze patterns preempted a special case of cluster algebras by $30$ years.  Frieze patterns have been the object of intensive research, which has revealed that they form a nexus between combinatorics, geometry, and representation theory; see \cite{M} for a recent survey.

\begin{Definition}
\label{def:type_Lambda_q_frieze_pattern}
Let $p \geqslant 3$ be an integer.  A frieze pattern is of {\em type $\Lambda_p$} if the quiddity row consists of (necessarily positive) integral multiples of
\[
\tag*{$\Box$}
  \lambda_p = 2\cos\Big( \frac{ \pi }{ p } \Big).
\]
\end{Definition}

For instance, $\lambda_3 = 1$ so type $\Lambda_3$ means that the quiddity row consists of (necessarily positive) integers.  By \cite[eq.\ (6.6)]{C} this is equivalent to the whole frieze pattern consisting of (necessarily positive) integers, except the zeroes in rows $0$ and $n+3$.  Hence the frieze patterns of type $\Lambda_3$ are precisely the integral frieze patterns considered by Conway and Coxeter in \cite{CC1} and \cite{CC2}.  In general, $\lambda_p$ is an algebraic integer, see \cite[prop.\ 2]{L}, and the first few cases are ubiquitous: $\lambda_3 = 1$, $\lambda_4 = \sqrt{2}$, $\lambda_5 = \frac{ \sqrt{5}+1 }{ 2 }$, $\lambda_6 = \sqrt{3}$.

It is a classic result by Conway and Coxeter that there is a bijection between triangulations of the $( n+3 )$-gon and integral frieze patterns of width $n$, see  \cite{CC1} and \cite{CC2}, items (28) and (29).  This is the special case $p=3$ of the following.

{\bf Theorem A. }
{\em 
There is a bijection between $p$-angulations of the $( n+3 )$-gon and frieze patterns of type $\Lambda_p$ and width $n$.
}

The proof uses less elementary means than \cite{CC1} and \cite{CC2}, relying on the theory of Hecke groups and the ensuing tilings of the hyperbolic plane by ideal $p$-angles, see Sections~\ref{sec:Farey} and~\ref{sec:ThmA}.  The skeleta of these tilings are the so-called Farey graphs $\cF_p$, and our approach generalises \cite[sec.\ 2]{MOT} which used $\cF_3$ to explain the result by Conway and Coxeter.

A {\em polygon dissection} $D$ of a polygon $P$ is a set of pairwise non-crossing diagonals.  It splits $P$ into subpolygons $P_1, \ldots, P_s$, see Figure~\ref{fig:four-angulation} on page~\pageref{fig:four-angulation}.  Observe that $D$ is a $p$-angulation if and only if each $P_i$ is a $p$-angle.  The following yields more general frieze patterns than Theorem A.

{\bf Theorem B. }
{\em 
There is an injection from polygon dissections of the $( n+3 )$-gon $P$ to frieze patterns of width $n$.  A dissection $D$ into subpolygons $P_i$, where $P_i$ is a $p_i$-gon, is mapped to a frieze pattern $F$ consisting of elements of $\cO_K$, the ring of algebraic integers of the field $K = \BQ( \lambda_{ p_1 }, \ldots, \lambda_{ p_s } )$.
}

The bijection in Theorem A is obtained by restricting the injection in Theorem B, which we now describe. 

\begin{Construction}
\label{con:Phi1}
A dissection $D$ of $P$ into subpolygons $P_i$, where $P_i$ is a $p_i$-gon, is mapped to a frieze pattern $F$ constructed as follows:  To each vertex $\alpha$ of $P$, associate the sum
\begin{equation}
\label{equ:the_recipe}
  \sum_{\mbox{\small $P_i$ is incident with $\alpha$}}\lambda_{ p_i }.
\end{equation}
When $\alpha$ cycles through the vertices of $P$ in the positive direction, this gives an $( n+3 )$-periodic sequence of elements of $\cO_K$.  This is the quiddity row of $F$.  See Remark~\ref{rmk:Phi2} for further details.
\hfill $\Box$
\end{Construction}

For example, consider the $4$-angulation in Figure~\ref{fig:four-angulation}.  Each subpolygon $P_i$ is a $4$-angle and $\lambda_4 = \sqrt{2}$.  Starting at vertex $0$, the elements associated to the vertices by Equation~\eqref{equ:the_recipe} are
\[
  \sqrt{2}, 2\sqrt{2}, \sqrt{2}, \sqrt{2}, 3\sqrt{2}, 2\sqrt{2}, \sqrt{2}, \sqrt{2}, 2\sqrt{2}, 2\sqrt{2}.
\]
Repeating this periodically gives the quiddity row of the frieze pattern in Figure~\ref{fig:sqrt2_frieze}.  Note that the occurrence of square roots in a regular pattern is an artefact of the $4$-angulation.  For example, the frieze pattern arising from the dissection of a $7$-gon into a triangle and a hexagon does not possess such regularity, see Figure~\ref{fig:sqrt3_frieze}.

The paper is organised as follows: Section~\ref{sec:friezes_on_polygons} recalls how frieze patterns correspond to friezes on polygons; this is essential to the rest of the paper.  Section~\ref{sec:Euclid} introduces a class of building blocks in the form of friezes defined by Euclidean lengths of diagonals.  They are glued to bigger friezes in Section~\ref{sec:gluing}.  Section~\ref{sec:ThmB} uses gluing to prove Theorem B.  Section~\ref{sec:Farey} recalls the Farey graph and proves a lemma on paths in such graphs. This is used to prove Theorem A in Section~\ref{sec:ThmA}.  Section~\ref{sec:questions} poses some questions.

\begin{figure}
\begingroup
\[
  \vcenter{
  \renewcommand{\objectstyle}{\scriptstyle}
  \xymatrix @-0.2pc @!0 {
    && 0 && 0 && 0 && 0 && 0 && 0 && 0 && 0 && 0 && 0 & \\
    &&& 1 && 1 && 1 && 1 && 1 && 1 && 1 && 1 && 1 && 1 \\
    && 2\sqrt{2} && \sqrt{2} && \sqrt{2} && 2\sqrt{2} && 2\sqrt{2} && \sqrt{2} && 2\sqrt{2} && \sqrt{2} && \sqrt{2} && 3\sqrt{2} & \\        
    &&& 3 && 1 && 3 && 7 && 3 && 3 && 3 && 1 && 5 && 11 & \\
    && 8\sqrt{2} && \sqrt{2} && \sqrt{2} && 5\sqrt{2} && 5\sqrt{2} && 4\sqrt{2} && 2\sqrt{2} && \sqrt{2} && 2\sqrt{2} && 9\sqrt{2} & \\    
    \cdots &&& 5 && 1 && 3 && 7 && 13 && 5 && 1 && 3 && 7 && 13 && \cdots \\
    && 4\sqrt{2} && 2\sqrt{2} && \sqrt{2} && 2\sqrt{2} && 9\sqrt{2} && 8\sqrt{2} && \sqrt{2} && \sqrt{2} && 5\sqrt{2} && 5\sqrt{2} & \\    
    &&& 3 && 3 && 1 && 5 && 11 && 3 && 1 && 3 && 7 && 3 & \\
    && \sqrt{2} && 2\sqrt{2} && \sqrt{2} && \sqrt{2} && 3\sqrt{2} && 2\sqrt{2} && \sqrt{2} && \sqrt{2} && 2\sqrt{2} && 2\sqrt{2} & \\    
    &&& 1 && 1 && 1 && 1 && 1 && 1 && 1 && 1 && 1 && 1 \\    
    && 0 && 0 && 0 && 0 && 0 && 0 && 0 && 0 && 0 && 0 & \\    
                        }
          }
\]
\endgroup
\caption{A frieze pattern of type $\Lambda_4$ and width $7$ with quiddity row $\sqrt{2}$, $2\sqrt{2}$, $\sqrt{2}$, $\sqrt{2}$, $3\sqrt{2}$, $2\sqrt{2}$, $\sqrt{2}$, $\sqrt{2}$, $2\sqrt{2}$, $2\sqrt{2}$ (repeating).  The pattern arises from the $4$-angulation in Figure~\ref{fig:four-angulation}, see Construction~\ref{con:Phi1}.  The width counts the number of non-trivial rows in the middle, not the rows of zeroes and ones at the bottom and top.}
\label{fig:sqrt2_frieze}
\end{figure}

\begin{figure}
\begingroup
\[
  \begin{tikzpicture}[auto]
    \node[name=s, shape=regular polygon, regular polygon sides=10, shape border rotate = 360/20, minimum size=6cm, draw] {}; 
    \draw[shift=(s.corner 1)] node[above] {$0$};
    \draw[shift=(s.corner 10)] node[above] {$1$};
    \draw[shift=(s.corner 9)] node[right] {$2$};
    \draw[shift=(s.corner 8)] node[right] {$3$};
    \draw[shift=(s.corner 7)] node[below] {$4$};
    \draw[shift=(s.corner 6)] node[below] {$5$};
    \draw[shift=(s.corner 5)] node[below] {$6$};
    \draw[shift=(s.corner 4)] node[left] {$7$};
    \draw[shift=(s.corner 3)] node[left] {$8$};
    \draw[shift=(s.corner 2)] node[above] {$9$};

    \draw[thick] (s.corner 7) to (s.corner 10);
    \draw[thick] (s.corner 2) to (s.corner 7);
    \draw[thick] (s.corner 3) to (s.corner 6);    
    
    \draw (0.65,0.9) node {$P_1$};
    \draw (2.35,0.1) node {$P_2$};
    \draw (-0.85,-0.25) node {$P_3$};
    \draw (-1.7,-1.5) node {$P_4$};    
  \end{tikzpicture} 
\]
\endgroup
\caption{A $4$-angulation splitting the $10$-gon $P$ (vertices $0,\ldots,9$) into $4$-gons $P_1$ (vertices $0,1,4,9$), $P_2$ (vertices $1,2,3,4$), $P_3$ (vertices $4,5,8,9$), and $P_4$ (vertices $5,6,7,8$).}
\label{fig:four-angulation}
\end{figure}

\begin{figure}
\begingroup
\[
  \vcenter{
  \renewcommand{\objectstyle}{\scriptstyle}
  \xymatrix @-0.1pc @!0 {
    && 0 && 0 && 0 && 0 && 0 && 0 && 0 && 0 && 0 && 0 & \\
    &&& 1 && 1 && 1 && 1 && 1 && 1 && 1 && 1 && 1 && 1 \\
    && 1 && 1+\sqrt{3} && \sqrt{3} && \sqrt{3} && \sqrt{3} && \sqrt{3} && 1+\sqrt{3} && 1 && 1+\sqrt{3} && \sqrt{3} & \\    
    &&& \sqrt{3} && 2+\sqrt{3} && 2 && 2 && 2 && 2+\sqrt{3} && \sqrt{3} && \sqrt{3} && 2+\sqrt{3} && 2 & \\
    {}\save[]+<0.7cm,0.3cm>*\txt<0pc>{$\scriptstyle\cdots$} \restore && 2 && 2 && 2+\sqrt{3} && \sqrt{3} && \sqrt{3} && 2+\sqrt{3} && 2 && 2 && 2 && 2+\sqrt{3} && {}\save[]+<-0.1cm,0.3cm>*\txt<0pc>{$\scriptstyle\cdots$} \restore \\
    &&& \sqrt{3} && \sqrt{3} && 1+\sqrt{3} && 1 && 1+\sqrt{3} && \sqrt{3} && \sqrt{3} && \sqrt{3} && \sqrt{3} && 1+\sqrt{3} & \\
    && 1 && 1 && 1 && 1 && 1 && 1 && 1 && 1 && 1 && 1 \\    
    &&& 0 && 0 && 0 && 0 && 0 && 0 && 0 && 0 && 0 && 0 & \\    
                        }
          }
\]
\endgroup
\caption{A frieze pattern of width $4$ with quiddity row $\sqrt{3}$, $\sqrt{3}$, $1+\sqrt{3}$, $1$, $1+\sqrt{3}$, $\sqrt{3}$, $\sqrt{3}$ (repeating).  The pattern arises from the dissection of a $7$-gon into a triangle and a hexagon, see Construction~\ref{con:Phi1}.}
\label{fig:sqrt3_frieze}
\end{figure}

\section{Friezes on polygons versus frieze patterns}
\label{sec:friezes_on_polygons}

This section shows that we can replace ``frieze pattern'' with ``frieze (on polygon)'' in Theorems A and B.  We need the following terminology.

\begin{Definition}
A {\em polygon} $P$ is a finite set $V$ of three or more {\em vertices} plus a cyclic ordering of $V$.

The predecessor and successor of $\alpha \in V$ are denoted $\alpha^-$ and $\alpha^+$.

An {\em edge} of $P$ is a subset $\{ \alpha,\alpha^+ \} \subset V$.  A {\em diagonal} of $P$ is a subset $\{ \alpha,\beta \} \subset V$ with $\beta \not\in \{ \alpha^-,\alpha,\alpha^+ \}$.

The diagonals $\{ \alpha,\beta \}$ and $\{ \gamma,\delta \}$ are said to {\em cross} if $\alpha,\beta,\gamma,\delta$ are four distinct vertices appearing in $V$ in either of the orders $\alpha,\gamma,\beta,\delta$ and $\alpha,\delta,\beta,\gamma$.

If $V$ has $p$ elements, then $P$ is called a {\em $p$-gon}.  It can be realised geometrically as a $p$-angle in the Euclidean plane, or as an ideal $p$-angle in the hyperbolic plane, see Remark~\ref{rmk:Farey} and Figure~\ref{fig:Farey}.
\hfill $\Box$
\end{Definition}

\begin{Definition}
\label{def:frieze}
Let $P$ be a polygon with vertex set $V$.  A {\em frieze} on $P$ is a map $f: V \times V \rightarrow [0,\infty[$ with the following properties.
\begin{enumerate}
\setlength\itemsep{4pt}

  \item  $f( \alpha,\beta ) = 0 \Leftrightarrow \alpha = \beta$.
  
  \item  $f( \alpha,\alpha^+ ) = 1$.

  \item  $f( \alpha,\beta ) = f( \beta,\alpha )$.
  
  \item  If $\{ \alpha,\beta \}$ and $\{ \gamma,\delta \}$ are crossing diagonals of $P$, then we have the Ptolemy relation
\begin{equation*}
\tag*{$\Box$}
  f( \alpha,\beta )f( \gamma,\delta ) = f( \alpha,\gamma )f( \beta,\delta ) + f( \alpha,\delta )f( \gamma,\beta ).
\end{equation*}

\end{enumerate}
\end{Definition}

\begin{Definition}
Rotating a frieze pattern $F$ by $45$ degrees places it as a diagonal
strip in the plane.  We will equip it with a coordinate system in
matrix style as shown in Figure~\ref{fig:coordinates}, with the first
coordinate increasing down, the second across, and $F( \alpha,\beta )$
will denote the entry of $F$ at position $( \alpha,\beta )$.
\begin{figure}
\begingroup
\begin{center}
  \begin{tikzpicture}[scale=1.1]

    \node (a) at (-1,5) { \tiny $(-n-3,-1)$ };
    \node (aa) at (0,4) { \tiny $(-n-2,0)$ };    
    \node (b) at (-1,1) { \tiny $(-1,-1)$ };
    \node (c) at (3,1) { \tiny $(-1,n+1)$ };
    \node (cc) at (2,2) { \tiny $(-2,n)$ };
    \node (d) at (0,0) { \tiny $(0,0)$ };
    \node (dd) at (1,-1) { \tiny $(1,1)$ };
    \node (e) at (4,0) { \tiny $(0,n+2)$ };
    \node (f) at (5,-1) { \tiny $(1,n+3)$ };    
    \node (ff) at (6,-2) { \tiny $(2,n+4)$ };    
    \node (g) at (4,-4) { \tiny $(n+2,n+2)$ };
    \node (gg) at (3,-3) { \tiny $(n+1,n+1)$ };    
    \node (h) at (5,-5) { \tiny $(n+3,n+3)$ };
    \node (i) at (9,-5) { \tiny $(n+3,2n+5)$ };
    \node (ii) at (8,-4) { \tiny $(n+2,2n+4)$ };

    \draw[dashed] (a) -- (b) -- (c) -- (cc) -- (aa) -- (a);
    \draw[dashed] (d) -- (e) -- (g) -- (gg) -- (dd) -- (d);
    \draw[dashed] (f) -- (h) -- (i) -- (ii) -- (ff) -- (f);
    
    \draw[thick,dotted] (-1.9,5.9) -- (-1.6,5.6);
    \draw[thick,dotted] (-1.9,1.9) -- (-1.6,1.6);
    \draw[thick,dotted] (5.6,-5.6) -- (5.9,-5.9);    
    \draw[thick,dotted] (9.6,-5.6) -- (9.9,-5.9);    
            
  \end{tikzpicture}
\end{center}
\endgroup
\caption{Rotating a frieze pattern $F$ of width $n$ by $45$ degrees places it as a diagonal strip in the plane.  We equip it with a coordinate system in matrix style as shown.  Coxeter proved that $F$ can be tiled by glide reflections of a triangular fundamental domain as indicated.  One of the copies of the domain has coordinates $( \alpha,\beta )$ with $0 \leqslant \alpha \leqslant \beta \leqslant n+2$.}
\label{fig:coordinates}
\end{figure}
\hfill $\Box$
\end{Definition}

\begin{Remark}
\label{rmk:fundamental_domain}
Coxeter proved in \cite[sec.\ 6]{C} that a frieze pattern $F$ of width $n$ can be tiled by glide reflections of a triangular fundamental domain as shown in Figure~\ref{fig:coordinates}.  One of the copies of the domain has coordinates $( \alpha,\beta )$ with $0 \leqslant \alpha \leqslant \beta \leqslant n+2$.
\hfill $\Box$
\end{Remark}

We now describe a well-known bijection between frieze patterns and friezes.  We do not know a reference, but provide a sketch of the proof.

\begin{Theorem}
\label{thm:frieze_patterns_and_friezes}
Let $P$ be an $( n+3 )$-gon with vertices $V = \{ 0,1, \ldots, n+2 \}$
where the numbering reflects the cyclic ordering of $V$.  There is a
bijection $F \mapsto \Pi( F )$ from frieze patterns of width $n$ to
friezes on $P$, where $f = \Pi( F )$ is the map $f : V \times V \rightarrow [0,\infty[$ defined by
\[
  f( \alpha,\beta ) = 
  \left\{
    \begin{array}{lc}
      F( \alpha,\beta ) & \mbox{for $\alpha \leqslant \beta$,} \\[2mm]
      F( \beta,\alpha ) & \mbox{for $\alpha > \beta$.}
    \end{array}
  \right.
\]
The inequalities refer to the canonical linear ordering of $V$.
\end{Theorem}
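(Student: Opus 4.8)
The plan is to verify that the map $F \mapsto \Pi(F)$ is well-defined (lands in friezes on $P$) and that it is a bijection by exhibiting an inverse. I would organise the argument around Coxeter's fundamental-domain picture recalled in Remark~\ref{rmk:fundamental_domain}: a frieze pattern of width $n$ is determined by, and freely extends from, its entries $F(\alpha,\beta)$ with $0 \leqslant \alpha \leqslant \beta \leqslant n+2$, and conversely a frieze on $P$ is a symmetric function on $V \times V$. So the content is matching up the two normalisations and translating the diamond rule into the Ptolemy relation.

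First I would check that $f = \Pi(F)$ satisfies (i)--(iii) of Definition~\ref{def:frieze}. Symmetry (iii) is immediate from the case distinction in the formula. For (i) and (ii) one reads off the values from the shape of a frieze pattern: row $1$ and row $n+2$ consist of ones, which in the $(\alpha,\beta)$-coordinates are exactly the entries with $\beta = \alpha + 1$ (giving $f(\alpha,\alpha^+) = 1$; one must be slightly careful at the ``wrap-around'' edge $\{0, n+2\}$, which is handled by the glide-reflection symmetry identifying position $(0,n+2)$ with a row-$1$ entry), and rows $0$ and $n+3$ consist of zeroes, which correspond to $\beta = \alpha$, giving $f(\alpha,\alpha) = 0$; positivity of the remaining entries gives the converse in (i). Then I would establish (iv): any two crossing diagonals of $P$ sit inside a ``diamond'' configuration after enough applications of the local rule, and Coxeter's result that the diamond rule $ad - bc = 1$ propagates to a global $3$-term relation (the Ptolemy relation among any four entries in ``general position'', see \cite[sec.\ 6]{C}) is exactly what is needed. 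Concretely, for four vertices $\alpha, \gamma, \beta, \delta$ in cyclic order one shows by induction on the number of diamonds separating them that $f(\alpha,\beta)f(\gamma,\delta) = f(\alpha,\gamma)f(\beta,\delta) + f(\alpha,\delta)f(\gamma,\beta)$, the base case being a single diamond where it reduces to $ad - bc = 1$.

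For injectivity and surjectivity I would construct the inverse explicitly: given a frieze $f$ on $P$, define $\Pi'(f)(\alpha,\beta) = f(\alpha,\beta)$ on the fundamental domain $0 \leqslant \alpha \leqslant \beta \leqslant n+2$ and extend to the whole strip by Coxeter's glide reflections. One must check this extension genuinely is a frieze pattern in the sense of Definition~\ref{def:frieze_pattern}: the boundary rows of zeroes and ones come from (i) and (ii) for $f$, and the diamond rule $ad - bc = 1$ for an interior diamond is the special case of the Ptolemy relation (iv) for four vertices that are ``consecutive'' in the appropriate sense (two of the six products collapse because two of the vertices become adjacent, so the relevant $f$-values are $1$). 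Then $\Pi' \circ \Pi = \id$ and $\Pi \circ \Pi' = \id$ are both verified on the fundamental domain and propagated by the respective symmetries.

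The main obstacle is bookkeeping rather than mathematics: pinning down precisely the correspondence between the two coordinate conventions near the boundary of the fundamental domain and at the wrap-around edge $\{0,n+2\}$, and making the induction in step~(iv) clean — i.e.\ arguing that any crossing pair of diagonals can be reduced to a single diamond by a sequence of Ptolemy moves without circularity. Since the paper explicitly only promises a sketch (``We do not know a reference, but provide a sketch of the proof''), I would present the two maps, verify the defining conditions on the fundamental domain, invoke Coxeter's propagation result for the global Ptolemy/diamond equivalence, and note that the symmetries of Figure~\ref{fig:coordinates} extend everything uniquely, leaving the edge-case verifications to the reader.
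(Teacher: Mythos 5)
Your overall skeleton agrees with the paper's: conditions (i)--(iii) of Definition~\ref{def:frieze} are immediate, the inverse is built by restricting to the fundamental domain $0\leqslant\alpha\leqslant\beta\leqslant n+2$ and extending by Coxeter's glide reflections, and the diamond rule is the Ptolemy relation for quadruples of the form $\alpha,\alpha+1,\beta,\beta+1$ (where two of the factors are $1$). The genuine gap is in the one direction that carries real content: showing that a frieze pattern $F$ satisfies the global Ptolemy relation, i.e.\ Definition~\ref{def:frieze}(iv) for $f=\Pi(F)$. Your ``induction on the number of diamonds separating them'' has a correct base case but no stated inductive step, and the local diamond rule alone cannot supply one: propagating $ad-bc=1$ to the three-term relation for arbitrary crossing diagonals genuinely uses the non-vanishing of the entries. (For patterns allowed to contain zeroes the implication fails, so any correct argument must invoke positivity somewhere; your sketch never does.) Concretely, what is needed is tameness, equivalently the three-term recurrence $F(x,\delta)=F(\delta-2,\delta)\,F(x,\delta-1)-F(x,\delta-2)$, and this is exactly what the paper imports: it views the rotated $F$ as a partial $\SL2$-tiling (\cite[p.~3139]{ARS}, \cite[def.~3.1]{HJ}), cites \cite[prop.~1]{BR} to get that adjacent $3\times 3$ determinants vanish (Dodgson condensation divided by a non-zero middle entry), and then applies the determinant identity in the proof of \cite[prop.~5.7]{HJ}, which becomes the Ptolemy relation after inserting $F(\gamma,\gamma)=F(\delta,\delta)=0$ and $F(\gamma,\gamma+1)=F(\delta,\delta+1)=1$. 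Your fallback appeal to ``Coxeter's propagation result'' is also not a safe patch: the paper uses \cite[sec.~6]{C} only for the glide-reflection fundamental domain and the continuant formula (6.6), not for a global Ptolemy relation, and the authors state they know of no reference for the theorem, which is why they route through \cite{ARS}, \cite{BR} and \cite{HJ}.

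So the comparison is: same decomposition into the two directions and the same use of the fundamental domain, but where the paper supplies the decisive ingredient (non-vanishing $\Rightarrow$ tameness $\Rightarrow$ Plücker/Ptolemy via the $\SL2$-tiling machinery), your proposal leaves that ingredient unnamed and the induction unspecified. To repair it, either reproduce the paper's route, or first prove the recurrence displayed above from the diamond rule and positivity, and then run your induction on quadruples with the recurrence as the engine; the rest of your outline can stand as is, at the same level of detail as the paper's sketch.
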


\begin{proof}
Let $\cP$ be the set of maps $f : V \times V \rightarrow [0,\infty[$ which can be written $f = \Pi( F )$ for a frieze pattern $F$.  Since $\Pi$ is clearly injective, it remains to show that $\cP$ consists precisely of the friezes on $P$.

On the one hand, let $f$ be a frieze on $P$.  We must show $f \in \cP$, that is, $f = \Pi( F )$ for a frieze pattern $F$.  Define a candidate fundamental domain by setting $F( \alpha,\beta ) = f( \alpha,\beta )$ for $0 \leqslant \alpha \leqslant \beta \leqslant n+2$, where the inequalities refer to the canonical linear ordering of $V$, cf.\ Remark~\ref{rmk:fundamental_domain}.  Extend to a candidate frieze pattern $F$ by using the tiling by glide reflections shown in Figure~\ref{fig:coordinates}.  Definition~\ref{def:frieze} implies that $F$ is indeed a frieze pattern, and it is clear that $f = \Pi( F )$.

On the other hand, let $f \in \cP$ be given, that is, $f = \Pi( F )$
for a frieze pattern $F$.  We must show that $f$ is a frieze.
Definition~\ref{def:frieze}(i)-(iii) is immediate.  For part (iv) of
the definition, if $\{ \alpha,\beta \}$ and $\{ \gamma,\delta \}$ are crossing diagonals of $P$, then we can assume $0 \leqslant \alpha < \gamma < \beta < \delta \leqslant n+2$, where the inequalities refer to the canonical linear ordering of $V$.  Rotating $F$ by $45$ degrees as shown in Figure~\ref{fig:coordinates}, we can view $F$ as a partially defined bi-infinite matrix.  Definition~\ref{def:frieze_pattern}(i) means that each adjacent $2 \times 2$-submatrix has determinant $1$, so $F$ is a so-called partial $\SL2$-tiling, see \cite[p.\ 3139]{ARS} and \cite[def.\ 3.1]{HJ}.  Moreover, the proof of \cite[prop.\ 1]{BR} shows that $F$ is tame, that is, each adjacent $3 \times 3$-submatrix has determinant $0$.  Hence the methods of \cite[sec.\ 5]{HJ} apply to $F$, and in particular, the proof of \cite[prop.\ 5.7]{HJ} shows
\[
  \begin{vmatrix}
    F( \alpha,\beta ) & F( \alpha,\delta ) \\[2.5mm]
    F( \gamma,\beta ) & F( \gamma,\delta )
  \end{vmatrix}
  =
  \begin{vmatrix}
    F( \alpha,\gamma ) & F( \alpha,\gamma+1 ) \\[2.5mm]
    F( \gamma,\gamma ) & F( \gamma,\gamma+1 )
  \end{vmatrix}
  \begin{vmatrix}
    F( \beta,\delta ) & F( \delta,\delta ) \\[2.5mm]
    F( \beta,\delta+1 ) & F( \delta,\delta+1 )
  \end{vmatrix}.
\]
But $F( \gamma,\gamma ) = F( \delta,\delta ) = 0$ and $F( \gamma,\gamma+1 ) = F( \delta,\delta+1 ) = 1$, so this gives 
\[
  F( \alpha,\beta )F( \gamma,\delta ) 
  - F( \alpha,\delta )F( \gamma,\beta )
  = F( \alpha,\gamma )F( \beta,\delta ).
\]
Here $F$ can be replaced by $f$ because $f = \Pi( F )$, showing that Definition~\ref{def:frieze}(iv) holds.
\end{proof}

Theorem~\ref{thm:frieze_patterns_and_friezes} means that we can replace ``frieze pattern'' with ``frieze'' in Theorem B.

\begin{Definition}
\label{def:type_Lambda_q_frieze}
A frieze $f$ is of {\em type $\Lambda_p$} if each $f( \alpha^-,\alpha^+ )$ is a (necessarily positive) integral multiple of $\lambda_p$.
\hfill $\Box$
\end{Definition}

Theorem~\ref{thm:frieze_patterns_and_friezes} and Definitions~\ref{def:type_Lambda_q_frieze_pattern} and~\ref{def:type_Lambda_q_frieze} imply:

\begin{Lemma}
\label{lem:type_Lambda_q}
Let $F$ be a frieze pattern, $f = \Pi( F )$ the corresponding frieze under the bijection in Theorem~\ref{thm:frieze_patterns_and_friezes}.  Then $F$ and $f$ are of type $\Lambda_p$ simultaneously.
\end{Lemma}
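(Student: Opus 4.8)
The plan is to reduce the claim to a single identification: the quiddity row of $F$ is, up to cyclic rotation and $(n+3)$-periodic repetition, the sequence $\bigl(f(\alpha^-,\alpha^+)\bigr)_{\alpha\in V}$. Granting this, the lemma is immediate. Indeed, by Definition~\ref{def:type_Lambda_q_frieze_pattern}, $F$ is of type $\Lambda_p$ exactly when every term of its quiddity row is a positive integral multiple of $\lambda_p$; by Definition~\ref{def:type_Lambda_q_frieze}, $f$ is of type $\Lambda_p$ exactly when every $f(\alpha^-,\alpha^+)$ is; and the identification makes these two conditions the same.

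To establish the identification, I would first fix the matrix coordinates of Figure~\ref{fig:coordinates}. With the row numbering of Definition~\ref{def:frieze_pattern}, rows $0$ and $1$ are the entries $F(\alpha,\alpha)=0$ and $F(\alpha,\alpha+1)=1$, so the quiddity row (row $2$) consists of the entries $F(\alpha,\alpha+2)$, $\alpha\in\BZ$. By Coxeter's glide-reflection symmetry (Remark~\ref{rmk:fundamental_domain}) this sequence is $(n+3)$-periodic in $\alpha$, so one full period is $F(0,2),F(1,3),\dots,F(n+2,n+4)$. The first $n+1$ of these entries lie in the fundamental domain $\{(\alpha,\beta)\mid 0\leqslant\alpha\leqslant\beta\leqslant n+2\}$, so by the formula for $\Pi$ in Theorem~\ref{thm:frieze_patterns_and_friezes} they equal $f(0,2),f(1,3),\dots,f(n,n+2)$; recalling that $V=\{0,1,\dots,n+2\}$ carries the cyclic order, these are $f(1^-,1^+),\dots,f\bigl((n+1)^-,(n+1)^+\bigr)$.

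It remains to deal with the two ``wrap-around'' entries $F(n+1,n+3)$ and $F(n+2,n+4)$. Here I would invoke the glide reflection of Remark~\ref{rmk:fundamental_domain} once more: one checks that, in the coordinates of Figure~\ref{fig:coordinates}, it has the form $(\alpha,\beta)\mapsto(\beta,\alpha+n+3)$ (this is forced by the requirements that it preserve the rows of $0$'s and $1$'s and square to the translation $(\alpha,\beta)\mapsto(\alpha+n+3,\beta+n+3)$). Applying it gives $F(n+1,n+3)=F(0,n+1)$ and $F(n+2,n+4)=F(1,n+2)$, and both right-hand entries lie in the fundamental domain, so by $\Pi$ they equal $f(0,n+1)$ and $f(1,n+2)$. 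In the cyclic order on $V$ one has $(n+2)^-=n+1$, $(n+2)^+=0$, $0^-=n+2$, $0^+=1$, so $f(0,n+1)=f\bigl((n+2)^-,(n+2)^+\bigr)$ and $f(1,n+2)=f(0^-,0^+)$. Thus one period of the quiddity row of $F$ is $f(1^-,1^+),\dots,f\bigl((n+1)^-,(n+1)^+\bigr),f\bigl((n+2)^-,(n+2)^+\bigr),f(0^-,0^+)$, a cyclic rotation of $\bigl(f(\alpha^-,\alpha^+)\bigr)_{\alpha\in V}$, as required.

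The main obstacle is purely bookkeeping: correctly aligning the matrix coordinates of the frieze pattern in Figure~\ref{fig:coordinates} with the cyclic vertex labels of $P$, and pinning down the glide reflection precisely enough to identify the two wrap-around quiddity entries with entries of the fundamental domain. Everything past that is a direct appeal to Theorem~\ref{thm:frieze_patterns_and_friezes} and Definitions~\ref{def:type_Lambda_q_frieze_pattern} and~\ref{def:type_Lambda_q_frieze}.
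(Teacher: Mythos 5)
Your proof is correct and follows the same route the paper intends: the paper states this lemma (and the quiddity-row identification, its Lemma~\ref{lem:quiddity2}) as an immediate consequence of Theorem~\ref{thm:frieze_patterns_and_friezes} and the coordinate system of Figure~\ref{fig:coordinates}, and your argument simply fills in that bookkeeping, including the correct glide-reflection formula $F(\alpha,\beta)=F(\beta,\alpha+n+3)$ for the two wrap-around entries.
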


This means that we can also replace ``frieze pattern'' with ``frieze'' in Theorem A.

The following lemma is an immediate consequence of Theorem~\ref{thm:frieze_patterns_and_friezes} and the coordinate system in Figure~\ref{fig:coordinates}.

\begin{Lemma}
\label{lem:quiddity2}
Let $F$ be a frieze pattern, $f = \Pi( F )$ the corresponding frieze under the bijection in Theorem~\ref{thm:frieze_patterns_and_friezes}.  The quiddity row of $F$ is formed by the numbers $f( \alpha^-,\alpha^+ )$ when $\alpha$ cycles through the vertices of the polygon $P$ in the positive direction.
\end{Lemma}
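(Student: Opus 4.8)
The plan is to read the quiddity row off the coordinate picture of $F$ in Figure~\ref{fig:coordinates} and then translate entries of $F$ into values of $f$ via the formula defining $\Pi$ in Theorem~\ref{thm:frieze_patterns_and_friezes}. By Definition~\ref{def:frieze_pattern} the quiddity row is row number $2$ of $F$, and in the matrix‑style coordinates of Figure~\ref{fig:coordinates} the entries of row number $i$ are the $F(\alpha,\alpha+i)$; hence the quiddity row is the $(n+3)$‑periodic sequence $\bigl(F(\alpha,\alpha+2)\bigr)_{\alpha \in \BZ}$, of which
\[
  F(0,2),\ F(1,3),\ \ldots,\ F(n,n+2),\ F(n+1,n+3),\ F(n+2,n+4)
\]
is one full period. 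It therefore suffices to show that, as $\alpha$ runs through $V=\{0,1,\ldots,n+2\}$ in the positive cyclic order $1,2,\ldots,n+1,n+2,0$, the numbers $f(\alpha^-,\alpha^+)$ run through precisely this list.

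First I would dispose of the vertices $\alpha$ with $1\leqslant\alpha\leqslant n+1$. For these, $\alpha^-=\alpha-1$ and $\alpha^+=\alpha+1$ satisfy $0\leqslant\alpha^-<\alpha^+\leqslant n+2$, so the formula defining $f=\Pi(F)$ gives $f(\alpha^-,\alpha^+)=F(\alpha-1,\alpha+1)$, which is the quiddity entry $F\bigl((\alpha-1),(\alpha-1)+2\bigr)$; letting $\alpha$ run through $1,\ldots,n+1$ produces the first $n+1$ entries $F(0,2),\ldots,F(n,n+2)$ of the period above. It then remains to see that the two ``wrap‑around'' vertices $\alpha=n+2$ and $\alpha=0$ account for the last two entries. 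Since $n+1>0$ and $n+2>1$, the definition of $\Pi$ directly yields $f(n+1,0)=F(0,n+1)$ for $\alpha=n+2$ and $f(n+2,1)=F(1,n+2)$ for $\alpha=0$; and the glide‑reflection symmetry of the frieze pattern recalled in Remark~\ref{rmk:fundamental_domain} identifies $F(0,n+1)$ with $F(n+1,n+3)$ and $F(1,n+2)$ with $F(n+2,n+4)$. Thus $f(\alpha^-,\alpha^+)$, for $\alpha$ cycling through $V$, does run through one full period of the quiddity row of $F$ (a different starting vertex only shifts the period cyclically), which is the assertion.

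I expect the only step that is more than a routine unwinding of definitions to be the handling of the two wrap‑around vertices $\alpha\in\{0,n+2\}$: there one must invoke the glide‑reflection symmetry of Remark~\ref{rmk:fundamental_domain} to recognise $F(0,n+1)$ and $F(1,n+2)$ as the remaining two quiddity entries $F(n+1,n+3)$ and $F(n+2,n+4)$, whereas for the other $n+1$ vertices the identification $f(\alpha^-,\alpha^+)=F(\alpha-1,\alpha+1)$ is immediate from the formula defining $\Pi$ in Theorem~\ref{thm:frieze_patterns_and_friezes}.
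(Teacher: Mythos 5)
Your argument is correct and is exactly the unwinding that the paper treats as immediate: it derives the lemma from the formula for $\Pi$ in Theorem~\ref{thm:frieze_patterns_and_friezes} together with the coordinate system of Figure~\ref{fig:coordinates}, which is all the paper itself offers (it states the lemma as an immediate consequence and gives no further proof). Your handling of the two wrap-around vertices via the glide-reflection identity $F(\alpha,\beta)=F(\beta,\alpha+n+3)$ from Remark~\ref{rmk:fundamental_domain} is the right way to make that ``immediate'' step explicit.
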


\begin{Lemma}
\label{lem:agree}
If $f$ and $f'$ are friezes on a polygon $P$ such that $f( \alpha^-,\alpha^+ ) = f'( \alpha^-,\alpha^+ )$ for each vertex $\alpha$, then $f = f'$.
\end{Lemma}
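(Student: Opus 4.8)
The plan is to prove that a frieze $f$ on the $(n+3)$-gon $P$ is determined by its quiddity values $f(\alpha^-,\alpha^+)$ by showing these values determine $f(\alpha,\beta)$ for \emph{all} pairs $\alpha,\beta$, proceeding by induction on the ``cyclic distance'' between $\alpha$ and $\beta$. First I would fix the labelling $V = \{0,1,\ldots,n+2\}$ reflecting the cyclic order and, for a diagonal or edge $\{\alpha,\beta\}$, define its span to be the number of edges of $P$ on the shorter arc between $\alpha$ and $\beta$. Edges have span $1$ and $f$ equals $1$ on them by Definition~\ref{def:frieze}(ii); the pairs of span $2$ are exactly the $f(\alpha^-,\alpha^+)$, which are the given data. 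So the base cases (span $1$ and $2$) are covered, and the induction hypothesis is that $f$ and $f'$ agree on all pairs of span $< k$ for some $k \geqslant 3$.

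Next I would carry out the inductive step using the Ptolemy relation in Definition~\ref{def:frieze}(iv). Given a pair $\{\alpha,\beta\}$ of span $k \geqslant 3$, say with $\beta$ reached from $\alpha$ by going $k$ steps in the positive direction along the short arc, pick an interior vertex $\gamma$ on that short arc with, say, $\gamma = \alpha^+$ (one step from $\alpha$). Then $\{\alpha,\beta\}$ crosses the diagonal $\{\gamma,\delta\}$ for an appropriate vertex $\delta$ on the complementary arc; choosing $\delta = \beta^+$ works. The Ptolemy relation then reads
\[
  f(\alpha,\beta)\,f(\gamma,\delta) = f(\alpha,\gamma)\,f(\beta,\delta) + f(\alpha,\delta)\,f(\gamma,\beta).
\]
Here the pairs $\{\alpha,\gamma\}$, $\{\beta,\delta\}$ have span $1$; the pairs $\{\gamma,\delta\}$, $\{\gamma,\beta\} = \{\alpha^+,\beta\}$, $\{\alpha,\delta\} = \{\alpha,\beta^+\}$ all have span $< k$ (they ``cut one corner'' off the arc of span $k$, or its complement). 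Since $f(\gamma,\delta) \neq 0$ by Definition~\ref{def:frieze}(i), we may solve for $f(\alpha,\beta)$, and the right-hand side involves only values already pinned down by the induction hypothesis applied to both $f$ and $f'$; hence $f(\alpha,\beta) = f'(\alpha,\beta)$. Finally, by symmetry (Definition~\ref{def:frieze}(iii)) this also handles the pair $\{\beta,\alpha\}$, so $f = f'$ on all of $V \times V$.

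The main point to get right is the bookkeeping in the inductive step: one must check that the crossing configuration $\{\alpha,\beta\}$ versus $\{\gamma,\delta\}$ genuinely exists as a pair of crossing diagonals of $P$ (i.e.\ that $n+3$ is large enough — but for $k \geqslant 3$ the polygon has at least $5$ vertices, so $\beta^+$ and $\alpha^+$ are distinct from the endpoints and from each other, and the four vertices appear in the correct cyclic order), and that the three ``new'' pairs on the right-hand side really do have strictly smaller span than $k$. The span of $\{\alpha^+,\beta\}$ is $k-1$; the span of $\{\alpha,\beta^+\}$ is $k-1$ as well (going the short way, this arc has $k+1$ edges unless $k+1 > (n+3)/2$, in which case the complementary arc is shorter and has span $n+2-k < k$ since $k\geqslant 3$ forces $n+2-k \leqslant n-1 < n+3-k$... more carefully, $\{\alpha,\beta^+\}$ has span $\min(k+1, n+2-k)$, and when $k+1$ is not the smaller one we still get a value $< k$ provided $n+2-k<k$, i.e.\ always when $k+1$ is not minimal); and the span of $\{\gamma,\delta\} = \{\alpha^+,\beta^+\}$ is $k$ again — so I would instead choose $\delta = \beta^+$ but take $\gamma$ to be the vertex \emph{two} steps from $\alpha$, or more cleanly split $\{\alpha,\beta\}$ via a \emph{single} interior vertex $\gamma$ of the $k$-arc together with $\delta=\alpha^-$, making all three auxiliary pairs have span at most $k-1$. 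The cleanest route, which I would adopt, is: for a pair of span $k\geqslant 3$, write it as $\{\alpha,\beta\}$ with $k$ edges from $\alpha$ to $\beta$ positively, let $\gamma$ be the neighbour $\alpha^+$ and $\delta$ be $\alpha^-$; then $\{\gamma,\delta\}=\{\alpha^-,\alpha^+\}$ is quiddity data, $\{\alpha,\gamma\}$ and $\{\alpha,\delta\}$ are edges, and $\{\gamma,\beta\}=\{\alpha^+,\beta\}$ (span $k-1$) and $\{\delta,\beta\}=\{\alpha^-,\beta\}$ (span $k+1$ or its complement $n+1-k$) — so I must verify $n+1-k<k$, equivalently $k>(n+1)/2$, which fails for small $k$, showing this choice also needs care. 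I therefore expect the genuine obstacle to be purely this choice of crossing diagonals; the resolution is to induct on span and, at step $k$, cross $\{\alpha,\beta\}$ with $\{\alpha^+,\beta^-\}$ using two interior neighbours, giving auxiliary pairs $\{\alpha,\alpha^+\},\{\beta,\beta^-\}$ (edges) and $\{\alpha^+,\beta\},\{\alpha,\beta^-\}$ of span $k-1$ and $\{\alpha^+,\beta^-\}$ of span $k-2$, all strictly below $k$, which closes the induction cleanly.
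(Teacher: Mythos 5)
Your overall strategy --- induct on the span of $\{\alpha,\beta\}$ and use the Ptolemy relation of Definition~\ref{def:frieze}(iv) to express a span-$k$ value in terms of shorter ones --- is a genuinely different route from the paper's. The paper disposes of the lemma in two lines: by Theorem~\ref{thm:frieze_patterns_and_friezes} the friezes $f,f'$ correspond to frieze patterns $F,F'$; by Lemma~\ref{lem:quiddity2} these have the same quiddity row; and a quiddity row determines the whole frieze pattern by repeated use of the diamond rule (the remark after Definition~\ref{def:frieze_pattern}). Your argument is more self-contained, never leaving the polygon, and in effect reproves that determination directly; the price is exactly the bookkeeping you wrestle with, which the paper's reduction avoids.

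However, the final configuration you settle on has a concrete flaw: $\{\alpha,\beta\}$ and $\{\alpha^+,\beta^-\}$ do \emph{not} cross, since $\alpha^+$ and $\beta^-$ both lie on the $k$-edge arc from $\alpha$ to $\beta$ (the cyclic order is $\alpha,\alpha^+,\beta^-,\beta$, which is not an interleaving of the two pairs). So Definition~\ref{def:frieze}(iv) cannot be applied to that pair, and the identity you would write down from it is in fact false for friezes (it has $+1$ where the correct identity has $-1$). The repair is immediate: for $k \geqslant 3$ the four vertices $\alpha,\alpha^+,\beta^-,\beta$ are distinct and the diagonals $\{\alpha,\beta^-\}$ and $\{\alpha^+,\beta\}$ genuinely cross, with Ptolemy relation
\[
  f(\alpha,\beta^-)\,f(\alpha^+,\beta)
  \;=\; f(\alpha,\alpha^+)\,f(\beta^-,\beta) + f(\alpha,\beta)\,f(\alpha^+,\beta^-)
  \;=\; 1 + f(\alpha,\beta)\,f(\alpha^+,\beta^-).
\]
Since $\alpha^+ \neq \beta^-$ forces $f(\alpha^+,\beta^-) > 0$ by Definition~\ref{def:frieze}(i), you can solve for $f(\alpha,\beta)$ in terms of the values at $\{\alpha,\beta^-\}$ and $\{\alpha^+,\beta\}$ (span $k-1$) and at $\{\alpha^+,\beta^-\}$ (span $k-2$), and the induction closes exactly as you intend, for $f$ and $f'$ simultaneously. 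With this one correction (and discarding the earlier exploratory choices of $\gamma,\delta$, which, as you yourself note, fail to lower the span), your proof is correct.
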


\begin{proof}
Theorem~\ref{thm:frieze_patterns_and_friezes} says that $f$ and $f'$ correspond to frieze patterns $F$ and $F'$, which have the same quiddity row by Lemma~\ref{lem:quiddity2}.  Hence $F = F'$ so $f = f'$.  
\end{proof}

\section{Friezes on polygons induced by Euclidean lengths of diagonals}
\label{sec:Euclid}

The following frieze appeared already in \cite[(16)]{CC2} and
\cite[sec.\ 5.3]{C2}.

\begin{Definition}
\label{def:ell_p}
Let $p \geqslant 3$ be an integer, $P$ a $p$-gon.  Let $P$ be realised as a regular $p$-angle with edges of length $1$ in the Euclidean plane.  If $\alpha,\beta$ are vertices of $P$, then set
\[
\tag*{$\Box$}
  \ell_p( \alpha,\beta ) = \mbox{the length of the line segment from $\alpha$ to $\beta$}.
\]
\end{Definition}

\begin{Lemma}
\label{lem:ell_q}
\begin{enumerate}
\setlength\itemsep{4pt}

  \item  $\ell_p$ is a frieze of type $\Lambda_p$ on $P$.
  
  \item  $\ell_p$ has values in $\cO_K$, the ring of algebraic integers of the field $K = \BQ( \lambda_p )$.  
  
  \item  If $\alpha$ is a vertex of $P$, then $\ell_p( \alpha^-,\alpha^+ ) = \lambda_p$.
    
  \item  If $\alpha',\alpha''$ are non-consecutive vertices of $P$, then $\ell_p( \alpha',\alpha'' ) > 1$.

\end{enumerate}
\end{Lemma}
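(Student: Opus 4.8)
The plan is to realise $P$ as a regular $p$-gon inscribed in a circle of radius $R = 1/\bigl( 2\sin( \pi/p ) \bigr)$, the value forced by the unit edge length, and to compute $\ell_p$ explicitly. If $\alpha,\beta$ are distinct vertices and $k \in \{ 1,\ldots,p-1 \}$ is the number of edges on one of the two boundary arcs between them, then the chord from $\alpha$ to $\beta$ subtends a central angle $2\pi k/p$, so
\[
  \ell_p( \alpha,\beta ) = 2R\sin\Bigl( \frac{ \pi k }{ p } \Bigr) = \frac{ \sin( \pi k/p ) }{ \sin( \pi/p ) };
\]
this is independent of the choice of arc since $\sin( \pi k/p ) = \sin\bigl( \pi( p-k )/p \bigr) > 0$. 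This single formula drives everything.

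For part (i): properties (i)--(iii) of Definition~\ref{def:frieze} are the standard facts that Euclidean distance is symmetric, vanishes exactly on the diagonal (distinct vertices of the regular $p$-gon are distinct points), and equals $1$ on edges by construction. For the Ptolemy relation (iv), observe that if $\{ \alpha,\beta \}$ and $\{ \gamma,\delta \}$ are crossing diagonals of $P$, then, since $P$ is realised as a convex polygon inscribed in its circumcircle, the four distinct vertices $\alpha,\gamma,\beta,\delta$ occur in this cyclic order on the circle, hence form a convex cyclic quadrilateral whose diagonals are $[ \alpha,\beta ]$ and $[ \gamma,\delta ]$ and whose sides are $[ \alpha,\gamma ]$, $[ \gamma,\beta ]$, $[ \beta,\delta ]$, $[ \delta,\alpha ]$. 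The classical theorem of Ptolemy for cyclic quadrilaterals then reads
\[
  \ell_p( \alpha,\beta )\,\ell_p( \gamma,\delta ) = \ell_p( \alpha,\gamma )\,\ell_p( \beta,\delta ) + \ell_p( \alpha,\delta )\,\ell_p( \gamma,\beta ),
\]
which is exactly Definition~\ref{def:frieze}(iv). Once (iii) is established, each $\ell_p( \alpha^-,\alpha^+ ) = \lambda_p$ is a positive integral multiple of $\lambda_p$, so $\ell_p$ is of type $\Lambda_p$.

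Part (iii) is the case $k = 2$ of the displayed formula, since $\alpha^-$ and $\alpha^+$ are separated by two edges: $\ell_p( \alpha^-,\alpha^+ ) = \sin( 2\pi/p )/\sin( \pi/p ) = 2\cos( \pi/p ) = \lambda_p$ (for $p = 3$ one checks directly that this remains valid). For part (ii), I would use the normalised Chebyshev polynomials $S_m \in \BZ[ x ]$ given by $S_0 = 1$, $S_1 = x$, $S_{m+1} = x S_m - S_{m-1}$, which are monic with integer coefficients and satisfy $S_m( 2\cos\theta ) = \sin\bigl( ( m+1 )\theta \bigr)/\sin\theta$; hence $\ell_p( \alpha,\beta ) = S_{k-1}( \lambda_p ) \in \BZ[ \lambda_p ]$, and as $\lambda_p$ is an algebraic integer (\cite[prop.\ 2]{L}) we get $\BZ[ \lambda_p ] \subseteq \cO_K$, proving (ii). For part (iv), a pair of non-consecutive vertices is separated by $k$ edges with $2 \leqslant k \leqslant p-2$ (so necessarily $p \geqslant 4$, the statement being vacuous for $p = 3$); since $\sin$ is concave on $[ 0,\pi ]$ and symmetric about $\pi/2$, its minimum over $[ 2\pi/p, \pi - 2\pi/p ]$ equals $\sin( 2\pi/p )$, and since $0 < \pi/p < 2\pi/p \leqslant \pi/2$ with $\sin$ strictly increasing on $[ 0,\pi/2 ]$, we obtain $\sin( \pi k/p ) \geqslant \sin( 2\pi/p ) > \sin( \pi/p )$, whence $\ell_p( \alpha',\alpha'' ) > 1$.

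The individual computations are short; the two points deserving care are the correct identification --- via the meaning of ``crossing'' --- of which of the six pairwise distances play the role of the diagonals and which the sides of the cyclic quadrilateral in part~(i), and the small-$p$ boundary cases in parts (iii) and (iv).
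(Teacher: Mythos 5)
Your proof is correct. For parts (i), (iii) and (iv) it is essentially the paper's argument made explicit: the paper verifies Definition~\ref{def:frieze}(i)--(iii) directly, invokes Ptolemy's theorem for cyclic quadrilaterals \cite[thm.\ 2.61]{CG} for the Ptolemy relation, and dismisses (iii) and (iv) as elementary trigonometry; your chord-length formula $\ell_p(\alpha,\beta)=\sin(\pi k/p)/\sin(\pi/p)$ just makes those computations concrete, and your identification of sides versus diagonals of the cyclic quadrilateral from the definition of ``crossing'' is exactly what is needed. The genuine divergence is in part (ii): the paper does not compute anything, but passes via Theorem~\ref{thm:frieze_patterns_and_friezes} and Lemma~\ref{lem:type_Lambda_q} to the corresponding frieze pattern $F$ and quotes Coxeter's formula \cite[eq.\ (6.6)]{C}, which writes every entry of $F$ as a determinant of a matrix whose entries are quiddity entries (lying in $\BN\lambda_p\subset\cO_K$) or $0$, $1$, so all values lie in $\cO_K$. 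You instead show directly that each diagonal length equals $S_{k-1}(\lambda_p)$ for the integer-coefficient Chebyshev-type polynomials $S_m$, hence lies in $\BZ[\lambda_p]\subseteq\cO_K$ because $\lambda_p$ is an algebraic integer. Your route is self-contained and yields the slightly sharper statement that the values lie in $\BZ[\lambda_p]$, at the price of a trigonometric identity; the paper's route avoids any closed formula for diagonal lengths by reusing the frieze-pattern machinery already set up in Section~1, which is also the mechanism it reuses later (the same idea underlies how integrality propagates from the quiddity row in general). Both arguments are complete; your handling of the boundary cases ($p=3$ in (iii), vacuity of (iv) for $p=3$) is careful and correct.
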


\begin{proof}
Part (iii) can be proved by elementary trigonometry, see also \cite[pp.\ 23--24]{S}.  Part (iv) is clear.

(i): It is clear that $\ell_p$ has values in the non-negative real
numbers and satisfies 
Definition~\ref{def:frieze}(i)-(iii), while condition (iv) in the definition is
Ptolemy's theorem, see \cite[thm.\ 2.61]{CG}.  Part (iii) of the lemma
shows that $\ell_p$ has type $\Lambda_p$.

(ii):  Theorem~\ref{thm:frieze_patterns_and_friezes} and Lemma~\ref{lem:type_Lambda_q} say that $\ell_p$ corresponds to a frieze pattern $F$ of type $\Lambda_p$.  It is enough to see that $F$ has entries in $\cO_K$, and it does because its quiddity row has entries in $\BN\lambda_p \subset \cO_K$, while \cite[eq.\ (6.6)]{C} expresses each entry of $F$ as the determinant of a matrix with entries which are either sampled from the quiddity row of $F$, or equal to $0$ or $1$.
\end{proof}

\section{Gluing friezes on polygons}
\label{sec:gluing}

\begin{Setup}
In this section, we consider a polygon $P$ with vertex set $V$ and a polygon dissection $D$ into subpolygons $P_1, \ldots, P_s$ with vertex sets $V_1, \ldots, V_s \subseteq V$ (polygon dissections were defined before Theorem B in the introduction).  Let $f_i$ be a frieze on $P_i$ for each $i$.
\hfill $\Box$
\end{Setup}

\begin{Lemma}
[Gluing friezes]
\label{lem:extension}
There is a unique frieze $f$ on $P$ that restricts to $f_i$ on $P_i$ for each $i$.  If each $f_i$ has values in $\cO_K$, the ring of algebraic integers of a field $K \subseteq \BR$, then so does $f$.
\end{Lemma}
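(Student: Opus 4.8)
The plan is to build $f$ on all of $V\times V$ by first treating the case of a single diagonal and then iterating, and to establish uniqueness via Lemma~\ref{lem:agree}.

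\textbf{Uniqueness.} This is the easy half. Suppose $f$ and $f'$ are both friezes on $P$ restricting to $f_i$ on each $P_i$. For each vertex $\alpha$ of $P$, the pair $\{\alpha^-,\alpha^+\}$ is either an edge of $P$ (if $\alpha^-\alpha^+$ is an edge), in which case $f(\alpha^-,\alpha^+)=f'(\alpha^-,\alpha^+)=1$ by Definition~\ref{def:frieze}(ii), or it is a diagonal, in which case $\alpha$ together with $\alpha^-$ and $\alpha^+$ lies in a single subpolygon $P_i$ (the one cut off near $\alpha$), so $f(\alpha^-,\alpha^+)=f_i(\alpha^-,\alpha^+)=f'(\alpha^-,\alpha^+)$. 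Thus $f$ and $f'$ agree on all quiddity pairs, and Lemma~\ref{lem:agree} gives $f=f'$. Strictly, one should first know $f$ exists; alternatively, phrase uniqueness as: any frieze on $P$ restricting to the $f_i$ has a prescribed quiddity row, hence at most one such frieze exists by Lemma~\ref{lem:agree}.

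\textbf{Existence.} I would induct on $s$, the number of subpolygons. The base case $s=1$ is trivial ($f=f_1$). For the inductive step, pick a diagonal $\delta=\{u,v\}\in D$ that splits $P$ into two polygons $P'$ and $P''$ with $P'\cup P''=P$, $P'\cap P''=\delta$, and such that $D$ restricts to dissections $D'$ of $P'$ and $D''$ of $P''$ with fewer subpolygons each. By induction, there are (unique) friezes $g'$ on $P'$ and $g''$ on $P''$ restricting to the relevant $f_i$; note $g'$ and $g''$ agree on the shared edge $\delta$ (both give value $1$ since $\delta$ is an edge of both $P'$ and $P''$). Now I define $f:V\times V\to[0,\infty[$ by the \emph{Ptolemy gluing formula}: for $\alpha\in V'\setminus\{u,v\}$ and $\beta\in V''\setminus\{u,v\}$ (the genuinely new pairs), set
\[
  f(\alpha,\beta) = \frac{g'(\alpha,u)\,g''(v,\beta) + g'(\alpha,v)\,g''(u,\beta)}{g'(u,v)},
\]
and on pairs lying within $V'$ (resp.\ $V''$) set $f=g'$ (resp.\ $f=g''$); these overlap only on $\delta\times\delta$-type pairs where they agree. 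Since $g'(u,v)=f(\delta$'s endpoints$)$ equals $\ell$-type value along $\delta$ inside $P'$ — in fact it is just whatever the two subfriezes assign, a positive real — the formula is well-defined and gives non-negative values. One then checks Definition~\ref{def:frieze}(i)--(iv) for $f$. Conditions (i)--(iii) are routine from the formula and the corresponding properties of $g',g''$. Condition (iv), the Ptolemy relation for a crossing pair of diagonals of $P$, splits into cases according to how the four vertices distribute between $V'$ and $V''$: if all four lie in $V'$ or all in $V''$ it is inherited; if they straddle $\delta$, one substitutes the gluing formula and reduces the identity to Ptolemy relations already known to hold in $P'$ and in $P''$ (using that $u,v$ are not separated by the crossing in the straddling configuration). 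Finally $f$ restricts to $f_i$ on each $P_i$: each $P_i$ lies entirely in $P'$ or in $P''$, so this is immediate from the inductive hypothesis.

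\textbf{The algebraic-integer clause.} Once existence is in hand, suppose each $f_i$ takes values in $\cO_K$ for a subfield $K\subseteq\BR$. By Theorem~\ref{thm:frieze_patterns_and_friezes} the frieze $f$ corresponds to a frieze pattern $F$, and by \cite[eq.\ (6.6)]{C} every entry of $F$ is a determinant of a matrix whose entries are drawn from the quiddity row of $F$ together with $0$'s and $1$'s. By Lemma~\ref{lem:quiddity2} the quiddity row consists of the numbers $f(\alpha^-,\alpha^+)$, each of which (as in the uniqueness argument) equals either $1$ or a value $f_i(\alpha^-,\alpha^+)\in\cO_K$. Since $\cO_K$ is a ring, any polynomial — in particular any determinant — in these entries lies in $\cO_K$, so all entries of $F$, hence all values of $f$, lie in $\cO_K$.

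\textbf{Main obstacle.} The delicate point is verifying the Ptolemy relation (Definition~\ref{def:frieze}(iv)) for $f$ in the straddling case — i.e.\ showing the gluing formula is genuinely \emph{consistent}, not merely one of several possible extensions. One must be careful that the two "halves" $g'(\alpha,u), g'(\alpha,v)$ etc.\ combine so that the resulting $f$ is symmetric and Ptolemy-compatible with \emph{all} crossing pairs simultaneously, including pairs with one endpoint $=u$ or $=v$; this is where the tameness / $\SL_2$-tiling viewpoint behind Theorem~\ref{thm:frieze_patterns_and_friezes} (or a direct but somewhat fiddly algebraic manipulation) does the real work. An alternative route that sidesteps the explicit formula: define the quiddity row of $P$ as above, invoke Theorem~\ref{thm:frieze_patterns_and_friezes} to get a frieze $f$ on $P$ with that quiddity row, and then show $f$ restricts to $f_i$ on each $P_i$ by comparing quiddity rows of $f|_{P_i}$ and $f_i$ and applying Lemma~\ref{lem:agree} inside $P_i$ — this trades the Ptolemy bookkeeping for a cleaner argument, and is probably the preferable presentation.
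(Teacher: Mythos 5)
Your overall strategy (induct on the number of subpolygons, cut along one diagonal, glue by the Ptolemy formula) is the same as the paper's, and your gluing formula agrees with the paper's table once you notice that $g'(u,v)=1$: the cut diagonal $\{u,v\}$ is an edge of both $P'$ and $P''$, so Definition~\ref{def:frieze}(ii) makes the division vacuous.  The paper itself leaves the verification that the glued map satisfies Definition~\ref{def:frieze}(i)--(iv) as a ``long winded, but elementary'' exercise, so your unfinished Ptolemy check in the straddling case is at the same level of detail as the published proof.  Noticing $g'(u,v)=1$ would also have handed you uniqueness for free: any frieze $f$ on $P$ restricting to the $f_i$ must satisfy $f(\alpha,\beta)=f(\alpha,u)f(v,\beta)+f(\alpha,v)f(u,\beta)$ for $\alpha,\beta$ on opposite sides of the cut, so its values are forced and the gluing formula is the only possible extension; this is exactly the paper's uniqueness argument, and it disposes of your worry that the formula might be ``merely one of several possible extensions.''

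Your own uniqueness argument, however, has a genuine gap: you claim that for every vertex $\alpha$ the triple $\alpha^-,\alpha,\alpha^+$ lies in a single subpolygon $P_i$, so that $f(\alpha^-,\alpha^+)=f_i(\alpha^-,\alpha^+)$.  This is false whenever a diagonal of $D$ has $\alpha$ as an endpoint: for vertex $4$ in Figure~\ref{fig:four-angulation} the vertices $3,4,5$ lie in no common subpolygon, and the quiddity entry there is $3\sqrt{2}$, a sum over the three incident subpolygons (this is precisely Lemma~\ref{lem:quiddity}, which the paper proves only after, and by means of, the gluing lemma).  The same false claim infects your argument for the $\cO_K$ clause; the repair is simply that the glued values are sums of products of values of the $f_i$, hence lie in the ring $\cO_K$, as the paper observes directly --- no appeal to Coxeter's determinant formula is needed.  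Finally, your proposed ``cleaner alternative'' --- prescribe a quiddity row and invoke Theorem~\ref{thm:frieze_patterns_and_friezes} to produce a frieze with that row --- is circular: that theorem converts frieze patterns into friezes, and an arbitrary positive sequence need not generate a frieze pattern (positivity of the intermediate rows and the closing rows of ones and zeroes are exactly what would have to be proved), so existence cannot be obtained that way.
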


\begin{proof}
By induction it is enough to prove the proposition for $s = 2$.  Then the dissection is given by a single diagonal $\{ \zeta,\eta \}$ and $V_1 \cap V_2 = \{ \zeta,\eta \}$.  Set $U_i = V_i \setminus \{ \zeta,\eta \}$.  The frieze $f$ must satisfy Definition~\ref{def:frieze}(iv) and restrict to $f_i$ on $V_i \times V_i$.  Hence the only possibility is to define $f( \alpha,\beta )$ by the following entries, according to whether $\alpha$ and $\beta$ are in $U_1$, $U_2$, or $\{ \zeta,\eta \}$.
\[
\mbox{
\begin{tabular}{c|cccccc}
  \diaghead{\theadfont xxxxxxxxx}{$\alpha$}{$\beta$} & $U_1$ & $\{ \zeta,\eta \}$ & $U_2$ \\[2mm] \cline{1-4}
  $U_1$ & $f_1( \alpha,\beta )$ & $f_1( \alpha,\beta )$ & $f_1( \zeta,\alpha )f_2( \eta,\beta ) + f_1( \eta,\alpha )f_2( \zeta,\beta )$ \\[2mm]
  $\{ \zeta,\eta \}$ & $f_1( \alpha,\beta )$ & $f_1( \alpha,\beta ) = f_2( \alpha,\beta )$ & $f_2( \alpha,\beta )$ \\[2mm]
  $U_2$ & $f_1( \zeta,\beta )f_2( \eta,\alpha ) + f_1( \eta,\beta )f_2( \zeta,\alpha )$ & $f_2( \alpha,\beta )$ & $f_2( \alpha,\beta )$ \\
\end{tabular}
}
\]
If each $f_i$ has values in $\cO_K$, then clearly so does $f$.  We leave as an exercise the long winded, but elementary verification that $f$ is indeed a frieze.
\end{proof}

\begin{Lemma}
\label{lem:ones}
For each $i$, suppose that if $\alpha',\alpha''$ are non-consecutive vertices of $P_i$, then $f_i( \alpha',\alpha'' ) > 1$.  Let $\alpha,\beta$ be non-consecutive vertices of $P$.  Then
\[
  f( \alpha,\beta ) = 1 \Leftrightarrow \mbox{the diagonal $\{ \alpha,\beta \}$ is in the dissection $D$}.
\]
\end{Lemma}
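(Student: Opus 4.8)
The plan is to prove the following strengthening by induction on the number $s$ of subpolygons (equivalently on $|D| = s-1$): for \emph{all} distinct vertices $\alpha,\beta$ of $P$ one has $f(\alpha,\beta) \geqslant 1$, with equality if and only if $\{\alpha,\beta\}$ is an edge of $P$ or a diagonal in $D$. Lemma~\ref{lem:ones} is then the special case in which $\alpha,\beta$ are non-consecutive, so that ``$\{\alpha,\beta\}$ is an edge of $P$'' is excluded. For $s=1$ we have $D = \emptyset$ and $f = f_1$, and the claim is exactly Definition~\ref{def:frieze}(i),(ii) together with the hypothesis on $f_1$.

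For the inductive step, fix a diagonal $d = \{\zeta,\eta\} \in D$; it splits $P$ into two subpolygons $P'$, $P''$, it is an edge of each, and $D$ restricts to dissections $D'$ of $P'$ and $D''$ of $P''$ with $D = D' \sqcup \{d\} \sqcup D''$ and $|D'|, |D''| < |D|$. Each $P_i$ lies on one side, so by Lemma~\ref{lem:extension} the friezes $\{f_i\}$ glue to friezes $g'$ on $P'$ and $g''$ on $P''$, and, again by the uniqueness clause of Lemma~\ref{lem:extension}, gluing $g'$ and $g''$ along $d$ returns $f$. The induction hypothesis applies to $(P', D', g')$ and to $(P'', D'', g'')$. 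Now invoke the explicit description of the glued frieze from the proof of Lemma~\ref{lem:extension}, with $g', g''$ in the roles of $f_1, f_2$: writing $U_1 = V(P') \setminus \{\zeta,\eta\}$ and $U_2 = V(P'') \setminus \{\zeta,\eta\}$, any pair of distinct vertices of $P$ either has both members in $V(P')$, or both in $V(P'')$, or one in $U_1$ and one in $U_2$.

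In the first case the table gives $f(\alpha,\beta) = g'(\alpha,\beta)$, and the induction hypothesis for $P'$ yields $f(\alpha,\beta) \geqslant 1$ with equality iff $\{\alpha,\beta\}$ is an edge of $P'$ or a diagonal in $D'$; a short combinatorial check — edges of $P'$ other than $d$ are edges of $P$, $D' \subseteq D$, and conversely a $P$-edge or a $D$-diagonal with both endpoints in $V(P')$ is an edge of $P'$ or lies in $D' \cup \{d\}$ — shows this is equivalent to ``$\{\alpha,\beta\}$ is an edge of $P$ or a diagonal in $D$''. The second case is symmetric. In the third case the table gives $f(\alpha,\beta) = g'(\zeta,\alpha)g''(\eta,\beta) + g'(\eta,\alpha)g''(\zeta,\beta)$, and since all four factors are values of $g'$ or $g''$ at distinct vertices, the induction hypothesis bounds each below by $1$, so $f(\alpha,\beta) \geqslant 2 > 1$; correspondingly $\{\alpha,\beta\}$ is neither an edge of $P$ (its endpoints are separated by $\zeta$ and $\eta$) nor a diagonal in $D$ (it crosses $d$, while $D$ is non-crossing). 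This completes the induction.

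I expect the main obstacle to be organisational rather than conceptual: getting the bookkeeping right for how edges and diagonals of $P$, $P'$, $P''$ and of $D$, $D'$, $D''$ match up under the cut along $d$, and confirming the ``associativity of gluing'' that identifies the glue of $g'$ and $g''$ with $f$ — both resting on the uniqueness in Lemma~\ref{lem:extension}. The one genuinely substantive point is recognising that the bare biconditional for non-consecutive pairs is too weak to survive the mixed case of the induction, which forces the strengthening to ``$f \geqslant 1$ everywhere, with the stated equality condition''.
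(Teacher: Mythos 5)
Your argument is correct, but it runs along a genuinely different line from the paper's. You induct on the size of the dissection: cut $P$ along one diagonal $d=\{\zeta,\eta\}\in D$, glue the $f_i$ on each side into friezes $g'$, $g''$ (Lemma~\ref{lem:extension}), identify the glue of $g'$ and $g''$ with $f$ by the uniqueness clause, and then read off the three cases from the gluing formula -- which forces you to strengthen the statement to ``$f\geqslant 1$ on all distinct pairs, with equality exactly on edges of $P$ and diagonals of $D$'' so that the mixed case survives. The paper instead fixes the pair $\{\alpha,\beta\}$ and inducts on the number $m$ of diagonals of $D$ that it crosses: for $m=0$ the pair lies in a single $P_i$ and the hypothesis applies; for $m\geqslant 1$ it applies the Ptolemy relation of Definition~\ref{def:frieze}(iv) at a crossed diagonal $\{\gamma,\delta\}\in D$ (whose value is $1$), notes that each of $\{\alpha,\gamma\}$, $\{\beta,\delta\}$, $\{\alpha,\delta\}$, $\{\gamma,\beta\}$ crosses fewer than $m$ diagonals of $D$, and gets $f(\alpha,\beta)\geqslant 2$; the $\geqslant 1$ bound for consecutive pairs is absorbed in-line via Definition~\ref{def:frieze}(ii) rather than by strengthening the statement. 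The paper's route is leaner: it never reopens the gluing construction, needs no ``associativity of gluing'' and no matching of edges and diagonals of $P$, $P'$, $P''$ under the cut (the bookkeeping you rightly flag as the main overhead, and partly leave as ``a short combinatorial check''). Your route buys a slightly stronger conclusion and stays close to the explicit glued values, and in the mixed case your formula is in any event just the Ptolemy relation at $d$; one small point to make explicit if you write it up is why $f$ restricted to $V(P')\times V(P')$ is again a frieze (crossing diagonals of $P'$ are crossing diagonals of $P$ because $V(P')$ is a contiguous arc), which is what makes the uniqueness argument identify it with $g'$.
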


\begin{proof}
If $\{ \alpha,\beta \}$ is in $D$, then $\alpha$ and $\beta$ are consecutive vertices of one of the subpolygons $P_i$ whence $f( \alpha,\beta ) = f_i( \alpha,\beta ) = 1$ by Definition~\ref{def:frieze}(ii).

If $\{ \alpha,\beta \}$ is not in $D$, then it crosses $m \geqslant 0$ diagonals in $D$, and we prove $f( \alpha,\beta ) > 1$ by induction on $m$.

Suppose $m=0$.  Then $\{ \alpha,\beta \}$ is a diagonal of one of the subpolygons $P_i$, so $f( \alpha,\beta ) = f_i( \alpha,\beta ) = ( \ast )$.  Since $\alpha,\beta$ are non-consecutive vertices of $P$ and since $\{ \alpha,\beta \}$ is not in $D$, it follows that $\alpha,\beta$ are non-consecutive vertices of $P_i$ whence $( \ast ) > 1$ by the assumption in the lemma.

Suppose $m \geqslant 1$.  Let $\{ \gamma,\delta \}$ be a diagonal in $D$ crossed by $\{ \alpha,\beta \}$.  Definition~\ref{def:frieze}(iv) gives
\[
  f( \alpha,\beta )f( \gamma,\delta )
  = f( \alpha,\gamma )f( \beta,\delta )
  + f( \alpha,\delta )f( \gamma,\beta ),
\]
and by the first two lines of the proof, this reads
\begin{equation}
\label{equ:ones}
  f( \alpha,\beta )
  = f( \alpha,\gamma )f( \beta,\delta )
  + f( \alpha,\delta )f( \gamma,\beta ).
\end{equation}
Each of $\{ \alpha,\gamma \}$, $\{ \beta,\delta \}$, $\{ \alpha,\delta \}$, $\{ \gamma,\beta \}$ crosses fewer than $m$ diagonals in $D$, see Figure~\ref{fig:Ptolemy}.
\begin{figure}
\begingroup
\[
  \begin{tikzpicture}[auto]
    \node[name=s, shape=regular polygon, regular polygon sides=20, minimum size=6cm, draw] {}; 
    \draw (1.5*360/20:3.25cm) node {$\alpha$};
    \draw (7.5*360/20:3.25cm) node {$\delta$};
    \draw (11.5*360/20:3.25cm) node {$\beta$};
    \draw (18.5*360/20:3.25cm) node {$\gamma$};    
    \draw[thick] (7.5*360/20:3cm) to (18.5*360/20:3cm);
    \draw[thick] (1.5*360/20:3cm) to (11.5*360/20:3cm);
    \draw[thick] (1.5*360/20:3cm) to (7.5*360/20:3cm);
    \draw[thick] (1.5*360/20:3cm) to (18.5*360/20:3cm);
    \draw[thick] (11.5*360/20:3cm) to (7.5*360/20:3cm);
    \draw[thick] (11.5*360/20:3cm) to (18.5*360/20:3cm);    
  \end{tikzpicture} 
\]
\endgroup
\caption{The diagonal $\{ \alpha,\beta \}$ is assumed to cross $m \geqslant 1$ diagonals in the polygon dissection $D$, among them $\{ \gamma,\delta \}$.  Each of $\{ \alpha,\gamma \}$, $\{ \beta,\delta \}$, $\{ \alpha,\delta \}$, $\{ \gamma,\beta \}$ only crosses diagonals in $D$ also crossed by $\{ \alpha,\beta \}$.  Hence each of $\{ \alpha,\gamma \}$, $\{ \beta,\delta \}$, $\{ \alpha,\delta \}$, $\{ \gamma,\beta \}$ crosses fewer than $m$ diagonals in $D$.}
\label{fig:Ptolemy}
\end{figure}
If $\alpha$ and $\gamma$ are non-consecutive vertices of $P$, then by induction $f( \alpha,\gamma ) > 1$.  If they are consecutive, then $f( \alpha,\gamma ) = 1$.  In any event, $f( \alpha,\gamma ) \geqslant 1$, and similarly $f( \beta,\delta ), f( \alpha,\delta ), f( \gamma,\beta ) \geqslant 1$.  But then Equation~\eqref{equ:ones} gives the first of the following inequalities:
$f( \alpha,\beta ) \geqslant 2 > 1$, completing the induction.
\end{proof}

\begin{Lemma}
\label{lem:quiddity}
Suppose there are numbers $c_1, \ldots, c_s$ such that, for each $i$, if $\alpha',\alpha,\alpha''$ are consecutive vertices of $P_i$, then $f_i( \alpha',\alpha'' ) = c_i$.  Then for each $\alpha \in V$,
\[
  f( \alpha^-,\alpha^+ )
  = \sum_{\mbox{\small $P_i$ is incident with $\alpha$}}c_i.
\]
\end{Lemma}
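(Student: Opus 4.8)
The plan is to analyse how the two vertices $\alpha^-$ and $\alpha^+$ sit relative to the dissection $D$, and to compute $f(\alpha^-,\alpha^+)$ by peeling off one subpolygon at a time using the gluing description. First I would fix $\alpha \in V$ and list the subpolygons $P_{i_1},\dots,P_{i_r}$ incident with $\alpha$; these occur in cyclic order around $\alpha$, and consecutive ones share a diagonal or edge emanating from $\alpha$. Writing $\beta_0 = \alpha^-$, $\beta_r = \alpha^+$, and $\beta_1,\dots,\beta_{r-1}$ for the other vertices adjacent to $\alpha$ in $P$ (the endpoints, other than $\alpha$, of the diagonals of $D$ through $\alpha$, together with $\alpha^\pm$), each pair $\{\alpha,\beta_j\}$ for $0 < j < r$ is either an edge of $P$ or a diagonal in $D$, so $f(\alpha,\beta_j) = 1$ by Definition~\ref{def:frieze}(ii) if it is an edge, and $f(\alpha,\beta_j) = 1$ by Lemma~\ref{lem:ones} — wait, more simply: in either case $\{\alpha,\beta_j\}$ is an edge of the subpolygon $P_{i_j}$, hence $f(\alpha,\beta_j) = f_{i_j}(\alpha,\beta_j) = 1$ directly from Definition~\ref{def:frieze}(ii) applied to the restriction. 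Also $\beta_{j-1},\alpha,\beta_j$ are consecutive vertices of $P_{i_j}$, so by hypothesis $f(\beta_{j-1},\beta_j) = f_{i_j}(\beta_{j-1},\beta_j) = c_{i_j}$.

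The key step is then an inductive application of the Ptolemy relation to slide from $\beta_{j-1}$ to $\beta_j$. Concretely, for $0 < j < r$ the diagonals $\{\alpha,\beta_j\}$ and $\{\beta_{j-1},\beta_{j+1}\}$ cross (both $\beta_{j-1}$ and $\beta_{j+1}$ lie on opposite sides of the diagonal $\{\alpha,\beta_j\}$, which separates $P_{i_j}$ from $P_{i_{j+1}}$), so Definition~\ref{def:frieze}(iv) gives
\[
  f(\alpha,\beta_j)\,f(\beta_{j-1},\beta_{j+1})
  = f(\alpha,\beta_{j-1})\,f(\beta_j,\beta_{j+1})
  + f(\alpha,\beta_{j+1})\,f(\beta_{j-1},\beta_j).
\]
Using $f(\alpha,\beta_j) = f(\beta_j,\beta_{j+1}) = 1$ (edges of subpolygons incident with $\alpha$) and $f(\beta_{j-1},\beta_j) = c_{i_j}$, this becomes $f(\beta_{j-1},\beta_{j+1}) = f(\alpha,\beta_{j-1}) + c_{i_j} f(\alpha,\beta_{j+1})$ — but $f(\alpha,\beta_{j+1}) = 1$ too, so $f(\beta_{j-1},\beta_{j+1}) = f(\alpha,\beta_{j-1}) + c_{i_j}$. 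Hmm, that is not quite the clean telescoping I want; the better move is to instead cross $\{\beta_0,\beta_j\}$ with $\{\alpha,\beta_{j-1}\}$ — no. Let me reorganise: I would prove by induction on $j$ that $f(\beta_0,\beta_j) = \sum_{k=1}^{j} c_{i_k}$. The base case $j=1$ is $f(\beta_0,\beta_1) = c_{i_1}$, already established. For the inductive step, the diagonals $\{\beta_0,\beta_j\}$ and $\{\alpha,\beta_{j-1}\}$ cross (since $\beta_0$ and $\beta_j$ are on opposite sides of $\{\alpha,\beta_{j-1}\}$), giving
\[
  f(\beta_0,\beta_j)\,f(\alpha,\beta_{j-1})
  = f(\beta_0,\alpha)\,f(\beta_j,\beta_{j-1})
  + f(\beta_0,\beta_{j-1})\,f(\alpha,\beta_j);
\]
here $f(\alpha,\beta_{j-1}) = f(\beta_0,\alpha) = f(\alpha,\beta_j) = 1$ and $f(\beta_j,\beta_{j-1}) = f(\beta_{j-1},\beta_j) = c_{i_j}$, so $f(\beta_0,\beta_j) = c_{i_j} + f(\beta_0,\beta_{j-1})$, and the induction closes. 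Taking $j = r-1$ and one final Ptolemy step (or simply the instance of the above computation with the pair $\beta_{r-1},\beta_r = \alpha^+$ handled the same way) yields $f(\alpha^-,\alpha^+) = f(\beta_0,\beta_r) = \sum_{k=1}^{r} c_{i_k} = \sum_{P_i \text{ incident with }\alpha} c_i$.

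The main obstacle I anticipate is purely bookkeeping: checking carefully that the vertices $\beta_0,\dots,\beta_r$ really do occur in this cyclic order around $\alpha$, that each consecutive pair bounds exactly one incident subpolygon, and that the claimed pairs of diagonals genuinely cross in the sense of the crossing definition (four distinct vertices in the right cyclic order) — in particular one must make sure $\beta_0,\dots,\beta_r$ are distinct and that none of the degenerate cases (e.g.\ $\alpha$ incident with a single subpolygon, so $r=1$ and the claim is immediate from the hypothesis) is overlooked. A small point to double-check is that when $\beta_{j-1}$ or $\beta_j$ equals $\alpha^-$ or $\alpha^+$ the edge $\{\alpha,\beta_j\}$ is an edge of $P$ rather than a diagonal of $D$, but this does not affect the value $f(\alpha,\beta_j) = 1$. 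Once the combinatorial layout is pinned down, every invocation of the Ptolemy relation is the single instance of Definition~\ref{def:frieze}(iv) written above, so the analytic content is minimal.
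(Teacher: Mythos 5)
Your final argument is correct, but it is organised differently from the paper's proof. The paper also works with the fan of diagonals of $D$ through $\alpha$, but it inducts on the number $m$ of incident subpolygons by peeling one off: it picks the diagonal $\{\alpha,\beta\}$ of $D$ cutting off the last incident subpolygon, invokes the gluing lemma (Lemma~\ref{lem:extension}) a second time to get a frieze $f'$ on the smaller polygon $P'$ on one side, applies the inductive hypothesis to $f'$, uses the uniqueness part of the gluing lemma to identify $f|_{P'}=f'$, and concludes with the single Ptolemy relation for the crossing pair $\{\alpha,\beta\}$, $\{\alpha^-,\alpha^+\}$, giving $f(\alpha^-,\alpha^+)=f(\beta,\alpha^+)+f(\alpha^-,\beta)$. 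You instead stay inside $P$ throughout and prove the stronger intermediate statement $f(\alpha^-,\beta_j)=c_{i_1}+\cdots+c_{i_j}$ by telescoping Ptolemy relations on the crossing pairs $\{\alpha^-,\beta_j\}$, $\{\alpha,\beta_{j-1}\}$; your final step $j=r$ is essentially the paper's one Ptolemy step, up to which end of the fan gets split off. What your version buys is self-containedness: no second appeal to Lemma~\ref{lem:extension} or to uniqueness of the glued frieze is needed, only the given fact that $f$ restricts to $f_i$ on each $P_i$; the cost is the bookkeeping you flag (distinctness and cyclic order of $\beta_0,\ldots,\beta_r$, and that for $j\geqslant 2$ the pairs $\{\alpha^-,\beta_j\}$ and $\{\alpha,\beta_{j-1}\}$ really are crossing diagonals of $P$), all of which does check out. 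Two small points: for $0<j<r$ the pair $\{\alpha,\beta_j\}$ is always a diagonal of $D$ (never an edge of $P$), though in every case it is an edge of the incident subpolygons, so $f(\alpha,\beta_j)=1$ as you say; and your abandoned first computation contains a slip ($f(\beta_j,\beta_{j+1})$ equals $c_{i_{j+1}}$, not $1$, unless $P_{i_{j+1}}$ is a triangle), but since you discarded that branch it does not affect the proof.
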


\begin{proof}
If $P_{ i_1 }, \ldots, P_{ i_m }$ are those of the $P_i$ which are incident with $\alpha$, then we must show
\begin{equation}
\label{equ:quiddity}
  f( \alpha^-,\alpha^+ ) = c_{ i_1 } + \cdots + c_{ i_m }.
\end{equation}
We do so by induction on $m$.

Suppose $m=1$.  Then $\alpha$ is incident with $P_{ i_1 }$ only, whence $\alpha^-,\alpha,\alpha^+$ are consecutive vertices of $P_{ i_1 }$.  This implies $f( \alpha^-,\alpha^+ ) = f_{ i_1 }( \alpha^-,\alpha^+ ) = c_{ i_1 }$, proving Equation~\eqref{equ:quiddity}.

Suppose $m \geqslant 2$.  As illustrated by Figure~\ref{fig:quiddity}, there is a vertex $\beta$ with the following properties: $\{ \alpha,\beta \}$ is a diagonal in $D$, which divides $P$ into subpolygons $P'$ with vertices $\{ \alpha,\alpha^+, \ldots, \beta^-,\beta \}$ and $P''$ with vertices $\{ \beta,\beta^+, \ldots, \alpha^-,\alpha \}$, such that $P_{ i_1 }$, $\ldots$, $P_{ i_{ m-1 } }$ are subpolygons of $P'$ while $P_{ i_m }$ is a subpolygon of $P''$.
\begin{figure}
\begingroup
\[
  \begin{tikzpicture}[auto]
    \node[name=s, shape=regular polygon, regular polygon sides=20, minimum size=6cm, draw] {}; 
    \draw (14.5*360/20:3.27cm) node {$\alpha^+$};
    \draw (15.5*360/20:3.25cm) node {$\alpha$};
    \draw (16.5*360/20+2:3.27cm) node {$\alpha^-$};
    \draw (0.5*360/20:3.32cm) node {$\beta^+$};    
    \draw (1.5*360/20:3.27cm) node {$\beta$};    
    \draw (2.5*360/20-1.5:3.40cm) node {$\beta^-$};    
    \draw[thick] (15.5*360/20:3cm) to (1.5*360/20:3cm);
    \draw[thick] (15.5*360/20:3cm) to (3.5*360/20:3cm);
    \draw[thick] (15.5*360/20:3cm) to (8.5*360/20:3cm);
    \draw[thick] (15.5*360/20:3cm) to (10.5*360/20:3cm);
    \draw (-1.7,-1.85) node {$P_{ i_1 }$};
    \draw (-2.25,-0.2) node {$P_{ i_2 }$};
    \draw (1.85,1.5) node {$P_{ i_{ m-1 } }$};
    \draw (2.35,-1) node {$P_{ i_m }$};
    \draw (95:2cm) node {$\cdot$};
    \draw (110:2cm) node {$\cdot$};
    \draw (125:2cm) node {$\cdot$};    
  \end{tikzpicture} 
\]
\endgroup
\caption{Computing the value $f( \alpha^-,\alpha^+ )$ when the frieze $f$ on $P$ restricts to friezes $f_i$ on the $P_i$.}
\label{fig:quiddity}
\end{figure}
By Lemma~\ref{lem:extension} there is a unique frieze $f'$ on $P'$ which restricts to $f_i$ on each $P_i$ which is a subpolygon of $P'$.  Viewed in $P'$, the vertices $\beta,\alpha,\alpha^+$ are consecutive while $\alpha$ is incident with $P_{ i_1 }, \ldots, P_{ i_{ m-1 } }$.  Hence by induction,
\[
  f'( \beta,\alpha^+ ) = c_{ i_1 } + \cdots + c_{ i_{ m-1 } }.
\]
Since $f$ is the unique frieze on $P$ which restricts to $f_i$ on $P_i$ for each $i$, it follows that $f$ restricts to $f'$ on $P'$.  Hence also
\begin{equation}
\label{equ:quiddity1}
  f( \beta,\alpha^+ ) = c_{ i_1 } + \cdots + c_{ i_{ m-1 } }.
\end{equation}

Since $\alpha^-,\alpha,\beta$ are consecutive vertices of $P_{ i_m }$ we have
\begin{equation}
\label{equ:quiddity3}
  f( \alpha^-,\beta ) = f_{ i_m }( \alpha^-,\beta ) = c_{ i_m }.
\end{equation}

Definition~\ref{def:frieze}(iv) implies
\[
  f( \alpha,\beta )f( \alpha^-,\alpha^+ )
  = f( \alpha,\alpha^- )f( \beta,\alpha^+ )
  + f( \alpha,\alpha^+ )f( \alpha^-,\beta ),
\]
and by Definition~\ref{def:frieze}(ii)-(iii), this reads
\begin{equation}
\label{equ:quiddity2}
  f( \alpha^-,\alpha^+ )
  = f( \beta,\alpha^+ ) + f( \alpha^-,\beta ).
\end{equation}

Combining Equations~\eqref{equ:quiddity1}, \eqref{equ:quiddity3}, and \eqref{equ:quiddity2} proves Equation~\eqref{equ:quiddity}.
\end{proof}

\section{Proof of Theorem B}
\label{sec:ThmB}

\begin{Definition}
\label{def:Phi}
Let $P$ be a polygon.  If a polygon dissection $D$ divides $P$ into subpolygons $P_1, \ldots, P_s$ where $P_i$ is a $p_i$-gon, then let $f_i$ denote $\ell_{ p_i }$ from Definition~\ref{def:ell_p} viewed as a frieze on $P_i$.  Lemma~\ref{lem:extension} gives a frieze $f$ on $P$ which restricts to $f_i$ on $P_i$.  Set $\Phi( D ) = f$.  
\hfill $\Box$
\end{Definition}

By virtue of Theorem~\ref{thm:frieze_patterns_and_friezes}, the following implies Theorem B.

\begin{Theorem}
\label{thm:B_prime}
\begin{enumerate}
\setlength\itemsep{4pt}

  \item  $\Phi$ is an injection from polygon dissections of $P$ to friezes on $P$.

  \item  In the situation of Definition~\ref{def:Phi}, the frieze $\Phi( D )$ has values in $\cO_K$, the ring of algebraic integers of the field $K = \BQ( \lambda_{ p_1 }, \ldots, \lambda_{ p_s } )$.

\end{enumerate}
\end{Theorem}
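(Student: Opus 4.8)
The plan is to deduce both parts directly from the gluing machinery of Section~\ref{sec:gluing} together with the properties of $\ell_p$ recorded in Lemma~\ref{lem:ell_q}; essentially no new work is needed beyond assembling these.

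For part (ii), I would argue as follows. In the situation of Definition~\ref{def:Phi}, each building block $f_i$ is $\ell_{p_i}$, which by Lemma~\ref{lem:ell_q}(ii) takes values in $\cO_{\BQ(\lambda_{p_i})}$. Since $\BQ(\lambda_{p_i}) \subseteq K$, any element of $\BQ(\lambda_{p_i})$ that is integral over $\BZ$ is in particular an element of $K$ integral over $\BZ$, hence lies in $\cO_K$; so $\cO_{\BQ(\lambda_{p_i})} \subseteq \cO_K$ and each $f_i$ has values in $\cO_K$. As $K \subseteq \BR$, Lemma~\ref{lem:extension} then yields that $\Phi(D) = f$, the unique frieze on $P$ restricting to $f_i$ on each $P_i$, also has values in $\cO_K$. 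This is exactly part (ii).

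For part (i), the key point is that Lemma~\ref{lem:ones} gives an explicit recipe to reconstruct the dissection from the frieze. Indeed, by Lemma~\ref{lem:ell_q}(iv) each $f_i = \ell_{p_i}$ satisfies $f_i(\alpha',\alpha'') > 1$ whenever $\alpha',\alpha''$ are non-consecutive vertices of $P_i$, so the hypothesis of Lemma~\ref{lem:ones} holds for $f = \Phi(D)$ and its restrictions $f_i$. Applying that lemma, for non-consecutive vertices $\alpha,\beta$ of $P$ one has $f(\alpha,\beta) = 1$ if and only if $\{\alpha,\beta\}$ belongs to $D$. Hence
\[
  D = \bigl\{\, \{\alpha,\beta\} \;\big|\; \alpha,\beta \text{ non-consecutive vertices of } P \text{ with } f(\alpha,\beta) = 1 \,\bigr\},
\]
so $D$ is recovered from $f = \Phi(D)$. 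Combined with the fact that $\Phi$ is a well-defined map into friezes on $P$ — which is precisely the content of Definition~\ref{def:Phi} via Lemma~\ref{lem:extension} — this shows $\Phi$ is injective, establishing part (i).

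I do not expect a genuine obstacle here: all the substantive content has been front-loaded into Lemma~\ref{lem:extension} (existence, uniqueness and integrality of the glued frieze) and into Lemmas~\ref{lem:ones} and~\ref{lem:ell_q} (the strict inequality $\ell_p(\alpha',\alpha'') > 1$ on non-consecutive vertices and its consequence for $f$). The only place calling for a moment's care is the containment $\cO_{\BQ(\lambda_{p_i})} \subseteq \cO_K$ of rings of algebraic integers, which, as noted above, is immediate from the definition of integrality over $\BZ$.
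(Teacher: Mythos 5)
Your proposal is correct and follows essentially the same route as the paper: part (i) via Lemma~\ref{lem:ell_q}(iv) and Lemma~\ref{lem:ones} to recover $D$ from $\Phi(D)$, and part (ii) via Lemma~\ref{lem:ell_q}(ii) and the integrality statement in Lemma~\ref{lem:extension}. The only difference is that you spell out the containment $\cO_{\BQ(\lambda_{p_i})} \subseteq \cO_K$, which the paper leaves implicit.
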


\begin{proof}
(i)  Observe that if $\alpha',\alpha''$ are non-consecutive vertices of $P_i$, then $f_i( \alpha',\alpha'' ) = \ell_{ p_i }( \alpha',\alpha'' ) > 1$ by Lemma~\ref{lem:ell_q}(iv).  If $\alpha,\beta$ are non-consecutive vertices of $P$, then Lemma~\ref{lem:ones} says that $\{ \alpha,\beta \}$ is in $D$ if and only if $f( \alpha,\beta ) = 1$, so $D$ can be recovered from $f$.  

(ii)  By Lemma~\ref{lem:extension} the frieze $\Phi( D ) = f$ has values in $\cO_K$ because so does each $\ell_{ p_i }$ by Lemma~\ref{lem:ell_q}(ii).  
\end{proof}

\begin{Remark}
\label{rmk:Phi2}
The frieze pattern $F$ in Theorem B is obtained from the frieze $\Phi( D ) = f$ on the polygon $P$ via Theorem~\ref{thm:frieze_patterns_and_friezes}.  This implies the description of $F$ given in Construction~\ref{con:Phi1}:

By Lemma~\ref{lem:quiddity2}, the quiddity row of $F$ is formed by the numbers $f( \alpha^-,\alpha^+ )$ when $\alpha$ cycles through the vertices of $P$ in the positive direction.

For each $i$, if $\alpha',\alpha,\alpha''$ are consecutive vertices of $P_i$, then $f_i( \alpha',\alpha'' ) = \ell_{ p_i }( \alpha',\alpha'' ) = \lambda_{ p_i }$ by Lemma~\ref{lem:ell_q}(iii).  Hence Lemma~\ref{lem:quiddity} applies with $c_i = \lambda_{ p_i }$, so we have
\begin{equation}
\label{equ:quiddity_sum}
  f( \alpha^-,\alpha^+ )
  = \sum_{\mbox{\small $P_i$ is incident with $\alpha$}} \lambda_{p_i}.
\end{equation}
This recovers Construction~\ref{con:Phi1}.

Note that if $D$ is a $p$-angulation then $p_i = p$ for each $i$.  Hence Equation \eqref{equ:quiddity_sum} shows that $f$ is of type $\Lambda_p$. 
\hfill $\Box$
\end{Remark}

\section{Paths in Farey graphs}
\label{sec:Farey}

\begin{Remark}
\label{rmk:Farey}
This remark recalls some material from \cite[sec.\ 1]{SW} for the convenience of the reader.

Consider the set $\overline{ \BC } = \BC \cup \{ \infty \}$ and the subsets $\BH = \{ z \in \BC \mid \Image( z ) > 0 \}$ and $\overline{ \BH } = \{ z \in \BC \mid \Image( z ) \geqslant 0 \} \cup \{ \infty \}$.

The open upper half plane $\BH$ is a model for the {\em hyperbolic plane}, and $\overline{ \BH }$ is a model for the hyperbolic plane and its {\em ideal boundary} which is identified with $\BR \cup \{ \infty \}$.

The group $\Mob( \overline{ \BC } )$ of {\em M\"{o}bius transformations} consists of the maps $\overline{ \BC } \rightarrow \overline{ \BC }$ of the form $z \mapsto \frac{ az+b }{ cz+d }$ with $a,b,c,d \in \BC$.  There is a surjective group homomorphism
\begin{equation}
\label{equ:mu}
  \SL2( \BC )
  \stackrel{ \mu }{ \longrightarrow }
  \Mob( \overline{ \BC } )
  \;\; , \;\;
  \mu \begin{pmatrix} a & b \\ c & d \end{pmatrix}
  = \Big( z \mapsto \frac{ az+b }{ cz+d } \Big).
\end{equation}
The kernel is $\{ \pm I \}$ where $I$ is the identity matrix.  The subgroup $\Mob( \BH ) \subset \Mob( \overline{ \BC } )$ consists of the M\"{o}bius transformations mapping $\BH$ to itself, or equivalently, mapping $\overline{ \BH }$ to itself.  It consists of the maps $z \mapsto \frac{ az+b }{ cz+d }$ with $a,b,c,d \in \BR$, and $\mu$ restricts to a surjective group homomorphism 
\[
  \SL2( \BR ) \longrightarrow \Mob( \BH ),
\]
again with kernel $\{ \pm I \}$.

Let $p \geqslant 3$ be an integer.  The {\em Hecke group} $G_p$ is the discrete subgroup of $\Mob( \BH )$ generated by $\sigma$ and $\tau_p$ where
\[
  \sigma( z ) = -\frac{ 1 }{ z }
  \;\;,\;\;
  \tau_p( z ) = z + \lambda_p,
\]
still with $\lambda_p = 2\cos( \frac{ \pi }{ p } )$.

Consider
\[
  L = \{ iy \mid 0 \leqslant y < \infty \} \cup \{ \infty \}
\]
which is a hyperbolic line in $\BH$ along with the ideal points $0$ and $\infty$ which it determines.  The {\em Farey graph} $\cF_p$ has vertices given by the $G_p$-orbit of $\infty$, and edges by the $G_p$-orbit of $L$.  It is the skeleton of a tiling of $\overline{ \BH }$ by ideal $p$-angles, that is, $p$-angles whose vertices are ideal points, see Figure~\ref{fig:Farey}.
\hfill $\Box$
\end{Remark}

\begin{Setup}
\label{set:Moebius}
In the rest of this section, let $n \geqslant 0$ be an integer, $q_0, \ldots, q_{ n+2 }$ positive integers.  Define M\"{o}bius transformations by
\[
  \xi_{ \alpha }( z ) = q_{ \alpha }\lambda_p - \frac{ 1 }{ z }.
\]  
Observe that $\xi_{ \alpha } = \tau_p^{ q_{ \alpha } } \circ \sigma$ is in $G_p$, so
\[
  \upsilon_0 = \infty
  \;\; \mbox{and} \;\;
  \upsilon_{ \alpha } = \xi_0 \cdots \xi_{ \alpha-1 }( \infty )
\]  
are vertices of $\cF_p$ for $1 \leqslant \alpha \leqslant n+2$.
\hfill $\Box$
\end{Setup}

\begin{figure}[H]
\begingroup
\[
  \begin{tikzpicture}[scale=10]
    \draw (-0.05,0) to (1.464213562,0);
    \draw (-0.10,0) node {$\cdot$};
    \draw (-0.08,0) node {$\cdot$};
    \draw (-0.06,0) node {$\cdot$};
    \draw (1.514213562,0) node {$\cdot$};    
    \draw (1.494213562,0) node {$\cdot$};    
    \draw (1.474213562,0) node {$\cdot$};

    \draw[thick,blue] (0,0) to (0,0.8);
    \draw[thick,blue] (-0.01,0.49) to (0.0,0.5);
    \draw[thick,blue] (0.01,0.49) to (0.0,0.5);    
    \draw (0,0.85) node {$\cdot$};
    \draw (0,0.83) node {$\cdot$};
    \draw (0,0.81) node {$\cdot$};        
    \draw[thick,blue] (1.414213562,0) to (1.414213562,0.8);
    \draw[thick,blue] (1.414213562-0.01,0.5) to (1.414213562,0.49);
    \draw[thick,blue] (1.414213562+0.01,0.5) to (1.414213562,0.49);        
    \draw (1.414213562,0.85) node {$\cdot$};
    \draw (1.414213562,0.83) node {$\cdot$};
    \draw (1.414213562,0.81) node {$\cdot$};            
    
    \draw (1.414213562,0) arc (0:180:0.353553391cm);
    \draw[thick,blue] (1.414213562,0) arc (0:180:0.176776695cm);
    \draw[thick,blue] ($(1.414213562,0)+(-0.176776695cm,0.176776695cm)+(0.01,0.01)$) to ($(1.414213562,0)+(-0.176776695cm,0.176776695cm)$);
    \draw[thick,blue] ($(1.414213562,0)+(-0.176776695cm,0.176776695cm)+(0.01,-0.01)$) to ($(1.414213562,0)+(-0.176776695cm,0.176776695cm)$);    
    \draw (1.414213562,0) arc (0:180:0.11785113cm);
    \draw (1.178511302,0) arc (0:180:0.023570226cm);
    \draw (1.13137085,0) arc (0:180:0.035355339cm);
    \draw[thick,blue] (1.060660172,0) arc (0:180:0.058925565cm);
    \draw[thick,blue] ($(1.060660172,0)+(-0.058925565cm,0.058925565cm)+(0.01,0.01)$) to ($(1.060660172,0)+(-0.058925565cm,0.058925565cm)$);
    \draw[thick,blue] ($(1.060660172,0)+(-0.058925565cm,0.058925565cm)+(0.01,-0.01)$) to ($(1.060660172,0)+(-0.058925565cm,0.058925565cm)$);        
    \draw (1.060660172,0) arc (0:180:0.025253814cm);    
    \draw (1.010152545,0) arc (0:180:0.010101525cm);
    \draw (0.989949494,0) arc (0:180:0.023570226cm);
    \draw[thick,blue] (0.942809042,0) arc (0:180:0.11785113cm);
    \draw[thick,blue] ($(0.942809042,0)+(-0.11785113cm,0.11785113cm)+(0.01,0.01)$) to ($(0.942809042,0)+(-0.11785113cm,0.11785113cm)$);
    \draw[thick,blue] ($(0.942809042,0)+(-0.11785113cm,0.11785113cm)+(0.01,-0.01)$) to ($(0.942809042,0)+(-0.11785113cm,0.11785113cm)$);            
    \draw (0.942809042,0) arc (0:180:0.029462783cm);
    \draw (0.883883476,0) arc (0:180:0.01767767cm);
    \draw (0.848528137,0) arc (0:180:0.070710678cm);
    \draw (0.707106781,0) arc (0:180:0.353553391cm);
    \draw[thick,blue] (0.707106781,0) arc (0:180:0.11785113cm);
    \draw[thick,blue] ($(0.707106781,0)+(-0.11785113cm,0.11785113cm)+(0.01,0.01)$) to ($(0.707106781,0)+(-0.11785113cm,0.11785113cm)$);
    \draw[thick,blue] ($(0.707106781,0)+(-0.11785113cm,0.11785113cm)+(0.01,-0.01)$) to ($(0.707106781,0)+(-0.11785113cm,0.11785113cm)$);                
    \draw (0.707106781,0) arc (0:180:0.070710678cm);    
    \draw (0.565685425,0) arc (0:180:0.01767767cm);
    \draw (0.530330086,0) arc (0:180:0.029462783cm);
    \draw (0.471404521,0) arc (0:180:0.058925565cm);
    \draw[thick,blue] (0.471404521,0) arc (0:180:0.023570226cm);
    \draw[thick,blue] ($(0.471404521,0)+(-0.023570226cm,0.023570226cm)+(0.007,0.007)$) to ($(0.471404521,0)+(-0.023570226cm,0.023570226cm)$);
    \draw[thick,blue] ($(0.471404521,0)+(-0.023570226cm,0.023570226cm)+(0.007,-0.007)$) to ($(0.471404521,0)+(-0.023570226cm,0.023570226cm)$);                
    \draw[thick,blue] (0.424264069,0) arc (0:180:0.010101525cm);
    \draw[thick,blue] (0.404061018,0) arc (0:180:0.025253814cm);
    \draw[thick,blue] ($(0.404061018,0)+(-0.023570226cm,0.023570226cm)+(0.007,0.007)$) to ($(0.404061018,0)+(-0.023570226cm,0.023570226cm)$);
    \draw[thick,blue] ($(0.404061018,0)+(-0.023570226cm,0.023570226cm)+(0.007,-0.007)$) to ($(0.404061018,0)+(-0.023570226cm,0.023570226cm)$);                    
    \draw[thick,blue] (0.353553391,0) arc (0:180:0.176776695cm);
    \draw[thick,blue] ($(0.353553391,0)+(-0.176776695cm,0.176776695cm)+(0.01,0.01)$) to ($(0.353553391,0)+(-0.176776695cm,0.176776695cm)$);
    \draw[thick,blue] ($(0.353553391,0)+(-0.176776695cm,0.176776695cm)+(0.01,-0.01)$) to ($(0.353553391,0)+(-0.176776695cm,0.176776695cm)$);                    
    \draw (0.353553391,0) arc (0:180:0.035355339cm);
    \draw (0.282842712,0) arc (0:180:0.023570226cm);
    \draw (0.23570226,0) arc (0:180:0.11785113cm);
    
    \draw (1.414213562,-0.028) node {\tiny $\upsilon_1$};
    \draw (1.060660172,-0.028) node {\tiny $\upsilon_2$};
    \draw (0.942809042,-0.028) node {\tiny $\upsilon_3$};
    \draw (0.707106781,-0.028) node {\tiny $\upsilon_4$};
    \draw (0.471404521,-0.028) node {\tiny $\upsilon_5$};
    \draw (0.431564069,-0.028) node {\tiny $\upsilon_6$};
    \draw (0.397061018,-0.028) node {\tiny $\upsilon_7$};
    \draw (0.353553391,-0.028) node {\tiny $\upsilon_8$};
    \draw (0.000000000,-0.028) node {\tiny $\upsilon_9$};
    
    \draw (0.707106781,0.475) node {$P'_2$};
    \draw (0.707106781+0.3,0.2) node {$P'_3$};
    \draw (0.707106781-0.3,0.2) node {$P'_1$};
  \end{tikzpicture} 
\]
\endgroup
\caption{The Farey graph $\cF_4$ is the skeleton of a tiling of
$\overline{ \BH }$ by ideal $4$-angles like $P'_1$, $P'_2$, $P'_3$.
\smallskip \newline The closed path (blue), including the vertex $\upsilon_0 = \infty$ not shown in the figure, is provided by Lemma~\ref{lem:SW}(i).  It encloses a $4$-angulated $10$-gon $P'$ with vertices $\upsilon_0, \ldots, \upsilon_9$.  It takes the $q_{ \alpha }$th right turn at vertex $\upsilon_{ \alpha }$, hence $q_{ \alpha }$ equals the number of $4$-angles inside the path which are incident with $\upsilon_{ \alpha }$.  \smallskip \newline For instance, we have $q_4 = 3$ and $P'_1$, $P'_2$, $P'_3$ are incident with $\upsilon_4$.}
\label{fig:Farey}
\end{figure}

\begin{Lemma}
\label{lem:SW}
Assume that $\xi_0 \cdots \xi_{ n+2 }( \infty ) = \infty$.
\begin{enumerate}
\setlength\itemsep{4pt}

  \item  The vertices $\upsilon_0, \ldots, \upsilon_{ n+2 }$ form a closed path in $\cF_p$.
  
  \item  Let $P'$ be the full subgraph of $\cF_p$ defined by $\upsilon_0, \ldots, \upsilon_{ n+2 }$.  Then $P'$ is an $( n+3 )$-gon with a $p$-angulation which divides $P'$ into $p$-gons $P'_1, \ldots, P'_s$ such that
\begin{equation}
\label{equ:count}
   q_{ \alpha } = \big|\{\, P'_i \,|\, \mbox{$P'_i$ is incident with $\upsilon_{ \alpha }$} \,\}\big|
\end{equation}
for $0 \leqslant \alpha \leqslant n+2$, where the bars denote cardinality.

\end{enumerate}
\end{Lemma}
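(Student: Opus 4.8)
The plan is to argue inside the tiling of $\overline{\BH}$ by ideal $p$-angles whose $1$-skeleton is $\cF_p$, using the facts recalled from \cite[sec.\ 1]{SW}: $G_p$ acts on this tiling by orientation-preserving homeomorphisms permuting tiles and edges; the edge set of $\cF_p$ is the $G_p$-orbit of $L$, whose ideal endpoints are $0$ and $\infty$; the stabiliser of the vertex $\infty$ in $G_p$ is $\langle\tau_p\rangle$; the neighbours of $\infty$ in $\cF_p$ are exactly the points $k\lambda_p$ ($k \in \BZ$), occurring around $\infty$ in the order inherited from $\BR$; and each edge bounds exactly two tiles. The computational backbone is the following: writing $g_\alpha := \xi_0\cdots\xi_{\alpha-1} \in G_p$ (so $g_0 = \id$, $\upsilon_\alpha = g_\alpha(\infty)$), and using $\sigma(\infty) = 0$, $\sigma^2 = \id$, $\tau_p^q(0) = q\lambda_p$, and $\xi_\alpha^{-1} = \sigma\tau_p^{-q_\alpha}$, one gets
\[
  \upsilon_{\alpha-1} = g_\alpha\xi_{\alpha-1}^{-1}(\infty) = g_\alpha\sigma(\infty) = g_\alpha(0),
  \qquad
  \upsilon_\alpha = g_\alpha(\infty),
  \qquad
  \upsilon_{\alpha+1} = g_\alpha\xi_\alpha(\infty) = g_\alpha(q_\alpha\lambda_p),
\]
valid for $1 \leqslant \alpha \leqslant n+3$ once we set $\upsilon_{n+3} := \xi_0\cdots\xi_{n+2}(\infty)$.

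\textbf{Part (i).} Since $\{0,\infty\}$ spans the edge $L$ and $g_\alpha \in G_p$, the pair $\{\upsilon_{\alpha-1},\upsilon_\alpha\} = g_\alpha\{0,\infty\}$ is an edge of $\cF_p$ for $1 \leqslant \alpha \leqslant n+3$. By hypothesis $\upsilon_{n+3} = \infty = \upsilon_0$, so $\upsilon_0,\upsilon_1,\ldots,\upsilon_{n+2},\upsilon_0$ is a closed walk in $\cF_p$.

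\textbf{Part (ii).} I expect the main obstacle to be showing that this closed walk is a \emph{simple} closed curve in $\overline{\BH}$: that $\upsilon_0,\ldots,\upsilon_{n+2}$ are pairwise distinct and distinct edges of the walk meet only at shared endpoints. I would prove this by induction on the number $s$ of tiles (equivalently on $n$, since $s(p-2) = n+1$). In the base case $s = 1$ we have $n+3 = p$ and all $q_\alpha = 1$, so $\xi_\alpha = \tau_p\sigma$ and $g_{n+3} = (\tau_p\sigma)^p = \id$ because $\tau_p\sigma$ has order $p$ in $G_p \cong C_2 * C_p$; then $\upsilon_0,\ldots,\upsilon_{p-1}$ are the $p$ distinct vertices of the single tile through $\infty$. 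For $s \geqslant 2$ I would first show that some $p-2$ consecutive indices $\alpha$ have $q_\alpha = 1$ — this is forced by the relation $(\tau_p\sigma)^p = \id$ together with $g_{n+3} \in \langle\tau_p\rangle$, and geometrically it corresponds to an \emph{ear}, a tile attached to the rest along a single diagonal. Deleting such a block (and decreasing each of the two flanking $q_\alpha$ by $1$) yields a shorter sequence of the same type; by induction its walk is a simple closed curve bounding a $p$-angulated polygon, and gluing the ear back along the diagonal recovers the original walk as a simple closed curve. (Alternatively, embeddedness may be imported directly from the analysis of such walks in \cite{SW}.) Granting embeddedness, the walk bounds a closed topological disk $\Delta$; since its edges are tile edges and tile edges cannot cross $\partial\Delta$, the interior of $\Delta$ is a union of finitely many tiles $P'_1,\ldots,P'_s$, whose edges not lying on $\partial\Delta$ are pairwise non-crossing diagonals. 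Thus $P'$ is an $(n+3)$-gon carrying this $p$-angulation.

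\textbf{The count.} Finally I would verify \eqref{equ:count}. Fix $\alpha$. Since $g_\alpha$ preserves the tiling and orientation, the neighbours of $\upsilon_\alpha = g_\alpha(\infty)$ in $\cF_p$ are the points $g_\alpha(k\lambda_p)$, occurring around $\upsilon_\alpha$ in the order of the $k\lambda_p$. The walk reaches $\upsilon_\alpha$ along the edge to $\upsilon_{\alpha-1} = g_\alpha(0)$ and leaves along the edge to $\upsilon_{\alpha+1} = g_\alpha(q_\alpha\lambda_p)$, and turning from the first edge to the second through $g_\alpha(\lambda_p),\ldots,g_\alpha((q_\alpha-1)\lambda_p)$ sweeps exactly the $q_\alpha$ tiles lying between consecutive edges to $g_\alpha(k\lambda_p)$ and $g_\alpha((k+1)\lambda_p)$, $0 \leqslant k \leqslant q_\alpha - 1$. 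Because $\Delta$ lies on a fixed side of the walk and every $q_\beta \geqslant 1$ makes the walk turn consistently, this sweep is over the $\Delta$-side; hence these $q_\alpha$ tiles are precisely the $P'_i$ incident with $\upsilon_\alpha$, proving \eqref{equ:count}.
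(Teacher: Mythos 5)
Your part (i) is exactly the paper's argument: the pair $\{ \upsilon_{ \alpha-1 },\upsilon_{ \alpha } \}$ is the image of the edge $L$ (ideal endpoints $0$ and $\infty$) under $g_{ \alpha } = \xi_0 \cdots \xi_{ \alpha-1 } \in G_p$, and the hypothesis closes the walk.  Your derivation of the count is also essentially what the paper does: the paper simply quotes \cite[sec.\ 2]{SW} for the statement that $\{ \upsilon_{ \alpha },\upsilon_{ \alpha+1 } \}$ is the $q_{ \alpha }$th right turn relative to $\{ \upsilon_{ \alpha-1 },\upsilon_{ \alpha } \}$, which is the fact you verify directly by transporting the local picture at $\infty$ (neighbours $k\lambda_p$, one tile between consecutive edges) by $g_{ \alpha }$.

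The divergence is over embeddedness of the closed walk.  The paper does not prove it: it asserts that, since $\cF_p$ is the skeleton of the tiling by ideal $p$-gons, the closed path encloses a $p$-angulated $( n+3 )$-gon, in effect importing this from \cite{SW}.  You rightly flag this as the main obstacle, but your ear-removal induction does not go through as described.  After deleting a block of $p-2$ consecutive indices with $q_{ \alpha } = 1$ and decrementing the two flanking entries, those entries can become $0$, so the shorter sequence need not satisfy the hypotheses; and no argument using only ``positive integers $q_{ \alpha }$ with $\xi_0 \cdots \xi_{ n+2 }( \infty ) = \infty$'' can succeed, because these hypotheses alone do not force simplicity or the count.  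For instance, with $p = 3$ and six entries equal to $1$ one has $\xi^6 = \id$, and the walk winds twice around the single triangle on $\{ 0, 1, \infty \}$; with $( q_0, q_1, q_2 ) = ( 5, 1, 1 )$ the fixed-point condition holds while \eqref{equ:count} fails at $\upsilon_0$.  So genuinely more input is needed (in the application, the $q_{ \alpha }$ come from a frieze, whose positivity excludes such degenerate walks, and \cite{SW} supplies the geometric analysis).  Your fallback of importing embeddedness from \cite{SW} is therefore the route the paper actually takes; the self-contained induction, as sketched, has a real gap.
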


\begin{proof}
(i)  The vertices $\infty$ and $0$ are neighbours in $\cF_p$ because they are linked by the edge $L$.  Hence $\xi_0( \infty ) = \upsilon_1$ and $\xi_0( 0 ) = \infty = \upsilon_0$ show that $\upsilon_1$ is a neighbour of $\upsilon_0$.  For $2 \leqslant \alpha \leqslant n+2$ we have $\xi_0 \cdots \xi_{ \alpha-1 }( \infty ) = \upsilon_{ \alpha }$ and $\xi_0 \cdots \xi_{ \alpha-1 }( 0 ) = \xi_0 \cdots \xi_{ \alpha-2 }( \infty ) = \upsilon_{ \alpha-1 }$, so $\upsilon_{ \alpha }$ is a neighbour of $\upsilon_{ \alpha-1 }$.  Finally, $\xi_0 \cdots \xi_{ n+2 }( \infty ) = \infty = \upsilon_0$ and $\xi_0 \cdots \xi_{ n+2 }( 0 ) = \xi_0 \cdots \xi_{ n+1 }( \infty ) = \upsilon_{ n+2 }$, so $\upsilon_0$ is a neighbour of $\upsilon_{ n+2 }$.

(ii)  Since $\cF_p$ is the skeleton of a tiling of $\overline{ \BH }$
by ideal $p$-angles, the closed path $\upsilon_0, \ldots, \upsilon_{
n+2 }$ encloses a $p$-angulation of an $( n+3 )$-gon $P'$.  Now
observe that \cite[sec.\ 2]{SW} provides the following description of
$q_{ \alpha }$: In the path $\upsilon_0, \ldots, \upsilon_{ n+2 }$, the edge $\{ \upsilon_{ \alpha },\upsilon_{ \alpha+1 } \}$ is the $q_{ \alpha }$th right turn in $\cF_p$ relative to $\{ \upsilon_{ \alpha-1 },\upsilon_{ \alpha } \}$, where $\alpha$ is considered modulo $n+3$.  This implies Equation~\eqref{equ:count}; see Figure~\ref{fig:Farey} for an example.
\end{proof}

\section{Proof of Theorem A}
\label{sec:ThmA}

We need the following well-known result used implicitly in \cite[sec.\ 2]{Cu1} and \cite[sec.\ 2]{Cu2}.

\begin{Lemma}
\label{lem:C}
Let $P$ be an $( n+3 )$-gon with vertices $V = \{ 0,1, \ldots, n+2 \}$ where the numbering reflects the cyclic ordering of $V$.  Let $f$ be a frieze on $P$ and set
\[
  X_{ \alpha } = 
  \begin{pmatrix}
  	f( \alpha^-,\alpha^+ ) & -1 \\
  	1            &  0
  \end{pmatrix}
\]
for $\alpha \in V$.  Then the product is $X_0 \cdots X_{ n+2 } = -I$ where $I$ is the identity matrix.
\end{Lemma}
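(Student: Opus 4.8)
The plan is to reduce the statement about the matrix product $X_0 \cdots X_{n+2}$ to the theory of friezes on polygons developed in Section~\ref{sec:friezes_on_polygons}, in particular to the Ptolemy relations of Definition~\ref{def:frieze} and to Lemma~\ref{lem:agree}. The natural approach is to relate the product of the $X_\alpha$ to the values $f(\alpha,\beta)$ of the frieze via a standard ``matrix continuant'' recursion. Concretely, I would introduce the partial products $P_k = X_0 X_1 \cdots X_{k-1}$ (with $P_0 = I$) and show by induction on $k$ that the entries of $P_k$ are given by frieze values: something like
\[
  P_k =
  \begin{pmatrix}
    f( 0^- , k^- ) & -f( 0^- , (k-1)^- ) \\
    f( 0 , k^- )   & -f( 0 , (k-1)^- )
  \end{pmatrix}
\]
or an analogous expression, the precise indices to be pinned down by checking the base case $k=1,2$ against the definition of $X_\alpha$. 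The inductive step amounts to the three-term recursion $f(\gamma,\beta^+) = f(\beta^-,\beta^+)\,f(\gamma,\beta) - f(\gamma,\beta^-)$, which is exactly the Ptolemy relation of Definition~\ref{def:frieze}(iv) applied to the crossing diagonals $\{\gamma,\beta^+\}$ and $\{\beta^-,\beta\}$, together with Definition~\ref{def:frieze}(ii)-(iii). Once the formula for $P_{n+3} = X_0 \cdots X_{n+2}$ is established, the wrap-around conditions $f(\alpha,\alpha) = 0$ and $f(\alpha,\alpha^+) = 1$ (Definition~\ref{def:frieze}(i)-(ii)), applied with the indices taken cyclically in $V$, collapse the matrix to $-I$.

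The cleanest way to handle the recursion without index bookkeeping headaches is probably to fix the frieze $f$ and verify the claim first for one well-understood example of a frieze, then invoke uniqueness. By Theorem~\ref{thm:B_prime} (or directly by taking a triangulation and the friezes $\ell_3$ glued along it, as in Definition~\ref{def:Phi}) every quiddity row arising from a dissection gives a frieze, and by Lemma~\ref{lem:agree} a frieze is determined by its quiddity row; but to cover \emph{all} friezes one instead argues purely formally. So I would not specialise: I would run the induction directly on a general frieze $f$, since every step uses only the axioms in Definition~\ref{def:frieze}.

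An alternative, and perhaps slicker, route is to recognise $X_\alpha$ as $\mu^{-1}$ of the M\"{o}bius transformation $\xi_\alpha$ appearing in Setup~\ref{set:Moebius} (with $q_\alpha \lambda_p$ replaced by $f(\alpha^-,\alpha^+)$), observe that $X_0 \cdots X_{n+2}$ represents the composite $\xi_0 \cdots \xi_{n+2}$, and then show this composite fixes $\infty$ and has the right derivative data, so that the matrix is $\pm I$; the sign is then fixed by a determinant or trace computation (note $\det X_\alpha = 1$ and one tracks the sign through $n+3$ factors). However, this requires importing the identification between friezes on $P$ and closed paths in the Farey graph, which in the excerpt is only set up for \emph{integral multiples of} $\lambda_p$ in Section~\ref{sec:Farey}, so for a general real frieze the direct continuant induction is more self-contained.

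The main obstacle I anticipate is purely notational: getting the indices in the inductive formula for $P_k$ exactly right, especially the interaction between the ``external'' labels $0$ and $0^-$ (which are $0$ and $n+2$ in cyclic order) and the ``running'' label $k$, and then making the wrap-around at $k = n+3$ produce precisely $-I$ rather than $+I$ or a permutation of a signed identity. Getting the sign correct is the one genuinely content-bearing point: it should come out of the observation that $f(0^-,0) = f(0^-, (0^-)^+ ) = 1$ and $f(0^-, 0^- ) = 0$ together with the analogous identities at the other corner, and I would double-check it on the smallest case, e.g.\ $n=0$ (a triangle, where $X_0 X_1 X_2 = \left(\begin{smallmatrix} 1 & -1 \\ 1 & 0\end{smallmatrix}\right)^3 = -I$), before trusting the general computation.
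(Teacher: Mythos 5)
Your proposal is correct and takes essentially the same route as the paper: the paper also extracts the three-term recursion $f(\alpha^-,0) = f(\alpha^-,\alpha^+)\,f(\alpha,0) - f(\alpha^+,0)$ from the Ptolemy relation together with Definition~\ref{def:frieze}(i)--(iii), and closes the loop using $f(\alpha,\alpha)=0$, $f(\alpha,\alpha^+)=1$ to pin down the sign. The only cosmetic difference is that the paper applies the product $X_0\cdots X_{n+2}$ to the two column vectors $\bigl(f(\alpha,0),\,f(\alpha^+,0)\bigr)^{\mathsf{T}}$ and iterates, rather than computing the entries of the partial products as continuants, which amounts to the same calculation.
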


\begin{proof}
Suppose $\alpha \neq 0$.  Definition~\ref{def:frieze}, (ii) and (iv), imply
\[
  f( \alpha^-,\alpha^+ )f( \alpha,0 )
  = f( \alpha^-,\alpha )f( \alpha^+,0 )
    + f( \alpha^-,0 )f( \alpha,\alpha^+ )
  = f( \alpha^+,0 ) + f( \alpha^-,0 ).
\]
It follows that
\[
  f( \alpha^-,0 )
  = f( \alpha^-,\alpha^+ )f( \alpha,0 ) - f( \alpha^+,0 )
\]
and hence
\[
  X_{ \alpha }
  \begin{pmatrix}
  	f( \alpha,0 ) \\ f( \alpha^+,0 )
  \end{pmatrix}
  =
  \begin{pmatrix}
  	f( \alpha^-,0 ) \\ f( \alpha,0 )
  \end{pmatrix}.
\]
Iterating this equation gives
\[
  X_1 \cdots X_{ n+2 }
  \begin{pmatrix}
  	f( n+2,0 ) \\ f\big( (n+2)^+,0 \big)
  \end{pmatrix}
  =
  \begin{pmatrix}
  	f( 1^-,0 ) \\ f( 1,0 )
  \end{pmatrix}.
\]
By Definition~\ref{def:frieze}(i)-(iii), this reads
\[
  X_1 \cdots X_{ n+2 }
  \begin{pmatrix}
  	1 \\ 0
  \end{pmatrix}
  =
  \begin{pmatrix}
    0 \\ 1
  \end{pmatrix},
\]
implying
\[
  X_0 \cdots X_{ n+2 }
  \begin{pmatrix}
  	1 \\ 0
  \end{pmatrix}
  =
  X_0
  \begin{pmatrix}
    0 \\ 1
  \end{pmatrix}
  =
  \begin{pmatrix}
    -1 \\ 0
  \end{pmatrix}.
\]

A similar computation proves
\[
  X_0 \cdots X_{ n+2 }
  \begin{pmatrix}
  	0 \\ 1
  \end{pmatrix}
  =
  \begin{pmatrix}
    0 \\ -1
  \end{pmatrix},
\]
and concatenating the last two equations gives
\[
  X_0 \cdots X_{ n+2 }
  \begin{pmatrix}
    1 & 0 \\
  	0 & 1
  \end{pmatrix}
  =
  \begin{pmatrix}
    -1 &  0 \\
     0 & -1
  \end{pmatrix},
\]
proving the lemma.
\end{proof}

By virtue of Theorem~\ref{thm:frieze_patterns_and_friezes} and Lemma~\ref{lem:type_Lambda_q}, the following implies Theorem A.

\begin{Theorem}
\label{thm:A_prime}
Let $P$ be a polygon.  There is a bijection between $p$-angulations of $P$ and friezes of type $\Lambda_p$ on $P$.
\end{Theorem}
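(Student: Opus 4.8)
The bijection will be $D \mapsto \Phi(D)$ from Definition~\ref{def:Phi}, restricted to $p$-angulations. By Theorem~\ref{thm:B_prime}(i) this map is already injective on all dissections, and by Remark~\ref{rmk:Phi2} it sends a $p$-angulation to a frieze of type $\Lambda_p$ (since all $p_i = p$, so every quiddity entry $f(\alpha^-,\alpha^+) = q_\alpha \lambda_p$ with $q_\alpha \in \BN$). Hence the only thing to prove is \emph{surjectivity}: given a frieze $f$ of type $\Lambda_p$ on $P$, we must exhibit a $p$-angulation $D$ with $\Phi(D) = f$. Since a frieze is determined by its quiddity row (Lemma~\ref{lem:agree}), and $\Phi(D)$ has the quiddity row described by Equation~\eqref{equ:quiddity_sum}, it suffices to find a $p$-angulation $D$ such that for every vertex $\alpha$, the number of $p$-gons of $D$ incident with $\alpha$ equals $q_\alpha$, where $q_\alpha$ is defined by $f(\alpha^-,\alpha^+) = q_\alpha \lambda_p$.

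\textbf{Key steps.} First I would fix the vertex labelling $V = \{0,1,\dots,n+2\}$ respecting the cyclic order and set $q_\alpha = f(\alpha^-,\alpha^+)/\lambda_p \in \BN_{\geqslant 1}$ for each $\alpha$. Next, I would form the matrices $X_\alpha = \left(\begin{smallmatrix} f(\alpha^-,\alpha^+) & -1 \\ 1 & 0\end{smallmatrix}\right)$ and invoke Lemma~\ref{lem:C} to get $X_0 \cdots X_{n+2} = -I$ in $\SL2(\BR)$. Since $f(\alpha^-,\alpha^+) = q_\alpha \lambda_p$, the M\"obius transformation attached to $X_\alpha$ is exactly $\xi_\alpha(z) = q_\alpha \lambda_p - \tfrac1z$ from Setup~\ref{set:Moebius}, and the relation $X_0\cdots X_{n+2} = -I$ passes through the homomorphism $\mu$ of \eqref{equ:mu} to give $\xi_0 \cdots \xi_{n+2} = \id$ on $\overline{\BH}$; in particular $\xi_0 \cdots \xi_{n+2}(\infty) = \infty$, which is the hypothesis of Lemma~\ref{lem:SW}. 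Applying Lemma~\ref{lem:SW}, the vertices $\upsilon_0,\dots,\upsilon_{n+2}$ form a closed path in the Farey graph $\cF_p$ bounding an $(n+3)$-gon $P'$ with a $p$-angulation into $p$-gons $P'_1,\dots,P'_s$ satisfying $q_\alpha = |\{\, P'_i \mid P'_i \text{ incident with }\upsilon_\alpha\,\}|$. Transporting this $p$-angulation across the order-preserving identification $\upsilon_\alpha \leftrightarrow \alpha$ yields a $p$-angulation $D$ of $P$ with the incidence counts equal to $q_\alpha$. Finally, by Equation~\eqref{equ:quiddity_sum} (with all $p_i = p$) the quiddity row of $\Phi(D)$ is $(q_\alpha \lambda_p)_\alpha$, which matches that of $f$; Lemma~\ref{lem:agree} then gives $\Phi(D) = f$, completing surjectivity.

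\textbf{Main obstacle.} The delicate point is the bridge from the algebraic identity $X_0\cdots X_{n+2} = -I$ to the geometric statement of Lemma~\ref{lem:SW}: one must check carefully that $\mu(X_\alpha)$ is genuinely $\xi_\alpha$ (sign conventions in $\SL2 \to \Mob$, and that $q_\alpha\lambda_p - \tfrac1z = \tfrac{q_\alpha\lambda_p z - 1}{z}$ corresponds to the stated matrix up to the harmless scalar $\pm1$), and that the hypothesis $\xi_0\cdots\xi_{n+2}(\infty)=\infty$ really does follow — it does, because $-I \in \ker\mu$, so $\xi_0 \cdots \xi_{n+2} = \mu(-I) = \id$. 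A secondary subtlety is ensuring the combinatorial $(n+3)$-gon $P'$ produced inside $\cF_p$ is honestly an $(n+3)$-gon — i.e.\ the $\upsilon_\alpha$ are pairwise distinct and the path is simple — but this is exactly what Lemma~\ref{lem:SW}(ii) asserts (the closed path encloses a genuine $p$-angulated $(n+3)$-gon), so it may be cited. Everything else is bookkeeping: matching quiddity rows and appealing to Lemma~\ref{lem:agree}.
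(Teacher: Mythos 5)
Your proposal is correct and follows essentially the same route as the paper: injectivity and type $\Lambda_p$ via Theorem~\ref{thm:B_prime} and Remark~\ref{rmk:Phi2}, then surjectivity by writing $f(\alpha^-,\alpha^+)=q_\alpha\lambda_p$, passing the matrix identity of Lemma~\ref{lem:C} through $\mu$ to get $\xi_0\cdots\xi_{n+2}(\infty)=\infty$, invoking Lemma~\ref{lem:SW} to obtain a $p$-angulation with the prescribed incidence counts, and concluding with Equation~\eqref{equ:quiddity_sum} and Lemma~\ref{lem:agree}. The subtleties you flag (that $\mu(X_\alpha)=\xi_\alpha$ and that $-I\in\ker\mu$) are exactly the points the paper relies on as well.
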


\begin{proof}
Definition~\ref{def:Phi} and Theorem~\ref{thm:B_prime} give an injection $\Phi$ from $p$-angulations of $P$ to friezes on $P$, which are of type $\Lambda_p$ by Remark~\ref{rmk:Phi2}.  We will show that it is surjective.

Let $P$ have vertices $V = \{ 0,\ldots,n+2 \}$ where the numbering reflects the cyclic ordering of $V$.  Let $f$ be a frieze of type $\Lambda_p$ on $P$.  For $\alpha \in V$ write
\begin{equation}
\label{equ:f_and_b}
  f( \alpha^-,\alpha^+ ) = q_{ \alpha }\lambda_p,
\end{equation}
and observe that $q_{ \alpha }$ is a positive integer.  Consider the matrices from Lemma~\ref{lem:C}:
\[
  X_{ \alpha }
  = 
  \begin{pmatrix}
  	f( \alpha^-,\alpha^+ ) & -1 \\
  	1            &  0
  \end{pmatrix}
  =  
  \begin{pmatrix}
  	q_{ \alpha }\lambda_p & -1 \\
  	1                     &  0
  \end{pmatrix}.
\]
Applying the homomorphism $\mu$ from Equation~\eqref{equ:mu} gives the  M\"{o}bius transformations $\xi_{ \alpha } = \mu( X_{ \alpha } )$ from Setup~\ref{set:Moebius} defined by
\[
  \xi_{ \alpha }( z ) = q_{ \alpha }\lambda_p - \frac{ 1 }{ z }.
\]
Lemma~\ref{lem:C} says $X_0 \cdots X_{ n+2 } = -I$ and applying $\mu$ gives $\xi_0 \cdots \xi_{ n+2 } = \id$.  In particular, $\xi_0 \cdots \xi_{ n+2 }( \infty ) = \infty$, so Lemma~\ref{lem:SW} provides a $p$-angulated $( n+3 )$-gon $P'$ with vertices $\upsilon_0, \ldots, \upsilon_{ n+2 }$.

By identifying $\upsilon_{ \alpha }$ with $\alpha$, the $p$-angulation of $P'$ becomes a $p$-angulation $D$ of $P$ which divides $P$ into $p$-gons $P_1, \ldots, P_s$.  By Equation~\eqref{equ:count} they satisfy
\begin{equation}
\label{equ:count2}
   q_{ \alpha} = \big|\{\, P_i \,|\, \mbox{$P_i$ is incident with $\alpha$} \,\}\big|
\end{equation}
for $\alpha \in V$.  

Set $f' = \Phi( D )$.  Remark~\ref{rmk:Phi2} shows
\[
  f'( \alpha^-,\alpha^+ )
  = \sum_{\mbox{\small $P_i$ is incident with $\alpha$}} \lambda_p
  = ( \ast )
\]  
for $\alpha \in V$.  Equation~\eqref{equ:count2} shows $( \ast ) = q_{ \alpha }\lambda_p$, whence Equation~\eqref{equ:f_and_b} implies $f' = f$ by Lemma~\ref{lem:agree}.
\end{proof}

\section{Questions}
\label{sec:questions}

\begin{enumerate}
\setlength\itemsep{4pt}

  \item  The frieze patterns of type $\Lambda_3$ are precisely the integral frieze patterns considered by Conway and Coxeter in \cite{CC1} and \cite{CC2}.  Is there a similarly useful characterisation of frieze patterns of type $\Lambda_p$ for $p \geqslant 4$?

  \item  Can the injection in Theorem B be promoted to a bijection?  In other words, is there a useful characterisation of the frieze patterns in the image?

  \item  Conway and Coxeter's theorem on triangulations and frieze patterns is closely related to the theory of cluster algebras of Dynkin type $A$.  How is Theorem A related to cluster algebras?

  \item  Conway and Coxeter's theorem on triangulations and frieze patterns is categorified by the theory of cluster characters on cluster categories of Dynkin type $A$.  Can Theorem A be categorified?

\end{enumerate}

\medskip
\noindent
{\bf Acknowledgement.}
We are deeply grateful to Ian Short who invited PJ to give a talk and gave him a copy of \cite{SW}, thereby providing the crucial impetus for this paper.

We thank Sophie Morier-Genoud, Yann Palu, and an anonymous referee for helpful comments on earlier versions.

This work was supported by EPSRC grant EP/P016014/1 ``Higher Dimensional Homological Algebra''.

\end{document}